\newcommand{\de}{\partial}
\newcommand{\db}{\overline{\partial}}
\newcommand{\ddbar}{i \partial \overline{\partial}}
\newcommand{\ov}[1]{\overline{#1}}
\newcommand{\ti}[1]{\tilde{#1}}
\newcommand{\vp}{\varphi}
\newcommand{\vol}{\mathrm{Vol}}
\newcommand{\ve}{\varepsilon}
\renewcommand{\leq}{\leqslant}
\renewcommand{\geq}{\geqslant}
\newcommand{\be}{\begin{equation}}
\newcommand{\ee}{\end{equation}}
\begin{document}
\newcounter{remark}
\newcounter{theor}
\setcounter{remark}{0}
\setcounter{theor}{1}
\newtheorem{claim}{Claim}
\newtheorem{theorem}{Theorem}[section]
\newtheorem{lemma}[theorem]{Lemma}
\newtheorem{corollary}[theorem]{Corollary}
\newtheorem{conjecture}[theorem]{Conjecture}
\newtheorem{proposition}[theorem]{Proposition}
\newtheorem{question}[theorem]{Question}
\theoremstyle{definition}
\newtheorem{defn}[theorem]{Definition}
\theoremstyle{definition}
\newtheorem{rmk}{Remark}[section]

\newenvironment{example}[1][Example]{\addtocounter{remark}{1} \begin{trivlist}
\item[\hskip
\labelsep {\bfseries #1  \thesection.\theremark}]}{\end{trivlist}}

\title{Semipositive line bundles and $(1,1)$-classes}
\author{Valentino Tosatti}
\address{Courant Institute of Mathematical Sciences, New York University, 251 Mercer St, New York, NY 10012}
\email{tosatti@cims.nyu.edu}
\begin{abstract}
We survey some recent developments on various notions of semipositivity for $(1,1)$-classes on complex manifolds, and discuss a number of open questions.
\end{abstract}

\maketitle

\section{Introduction}
Let $X^n$ be a compact K\"ahler manifold. A closed real $(1,1)$-form $\alpha$ on $X$ defines a cohomology class
\begin{equation}\label{quot}
[\alpha]\in H^{1,1}(X,\mathbb{R})=\frac{\{\text{closed real }(1,1)\text{-forms}\}}{\{\ddbar \vp\ |\ \vp\in C^\infty(X,\mathbb{R})\}},
\end{equation}
in the finite-dimensional real vector space $H^{1,1}(X,\mathbb{R})$ of $(1,1)$-classes. Kodaira's $\de\db$-Lemma shows that the natural map
$H^{1,1}(X,\mathbb{R})\to H^2(X,\mathbb{R})$ to deRham cohomology is an injection.

The simplest example of a $(1,1)$-class is $[\omega]$, where $\omega$ is a K\"ahler form on $X$, which we will refer to as a {\em K\"ahler class}.  Every K\"ahler class is nonzero since
$$[\omega]^n\cdot X=\int_X\omega^n=n!\, \vol_g(X)>0,$$
where $g$ is the Hermitian metric associated to $\omega$. If we let $\omega$ vary among all K\"ahler forms on $X$, the corresponding K\"ahler classes $[\omega]$ define a cone
$$\mathcal{C}=\{[\omega]\ |\ \omega \text{ K\"ahler form on }X\}\subset H^{1,1}(X,\mathbb{R}),$$
which is nonempty, open and convex, the K\"ahler cone of $X$, see e.g. \cite[p.288]{To4}. We think of K\"ahler classes as ``strictly positive'' classes.

If $L\to X$ is a holomorphic line bundle and $h$ is a smooth Hermitian metric on $L$, then the curvature form $R_h$ of $h$ is a closed real $(1,1)$-form on $X$ given locally by
$$R_h=-\frac{i}{2\pi}\de\db\log h,$$
and its cohomology class
$$c_1(L)=[R_h]\in H^{1,1}(X,\mathbb{R}),$$
is easily seen to be independent of the choice of $h$, and is called the first Chern class of $L$. Its image in $H^2(X,\mathbb{R})$ lies in $N^1(X)$ (which is defined as the image of the change of scalars map
$H^2(X,\mathbb{Z})\to H^2(X,\mathbb{R})$), and conversely every $(1,1)$-class in $N^1(X)$ is of the form $c_1(L)$ for some holomorphic line bundle $L$.  A line bundle $L$ with $c_1(L)\in\mathcal{C}$ is called ample, and by the Kodaira embedding theorem sections of some positive tensor power of $L$ provide a projective embedding $X\subset\mathbb{P}^N$.

The main object of interest in this survey are $(1,1)$-classes in the closure of the K\"ahler cone $\ov{\mathcal{C}}$, which are known as nef classes. Being limits of K\"ahler classes, nef classes can naively be thought of as ``semipositive'' classes. However, as we will see, this is not quite true literally. The main theme of this note will be to explore various notions of semipositivity for $(1,1)$-classes, as well as line bundles, and to draw relations between them, some proven and some conjectural. In particular, we will discuss the author's Conjectures \ref{c4} and \ref{c0}, as well as Conjecture \ref{c2} of Filip and the author, and the new Question \ref{bzp} and Conjecture \ref{c5}. We will also mention a geometric application of these results to the study of families of Ricci-flat K\"ahler metrics, and in the last section we will consider the more general setting of (possibly non-K\"ahler) compact complex manifolds.

\subsection*{Acknowledgments}The author thanks O. Das for comments on Question \ref{bzp}, M. Sroka for discussions about Section \ref{nk} and D. Kim, X. Yang for very useful comments. The author was partially supported by NSF grant DMS-2231783. It is a pleasure to dedicate this note to the memory of Professor Zhihua Chen, whose work on the Schwarz Lemma in complex geometry (including \cite{CCL,CY}) had a big impact on the author's early work \cite{To0}.

\section{Nef $(1,1)$-classes}
\subsection{Nef and semipositive classes}
As in the Introduction, $X^n$ will be a compact K\"ahler manifold, and $\alpha$ a closed real $(1,1)$-form on $X$. Its cohomology class in $H^{1,1}(X,\mathbb{R})$ is denoted by $[\alpha]$.

Recall that in the introduction we have defined the K\"ahler cone $\mathcal{C}\subset H^{1,1}(X,\mathbb{R})$, consisting of cohomology classes of K\"ahler forms, and defined the nef cone $\mathcal{C}$ as the closure of $\mathcal{C}$.

It is elementary to see (e.g. \cite[Lemma 2.2]{To4}) that a $(1,1)$-class $[\alpha]$ is nef if and only if for every $\ve>0$ there is $\vp_\ve\in C^\infty(X,\mathbb{R})$ such that
\begin{equation}\label{succa}
\alpha+\ddbar\vp_\ve\geq -\ve\omega,
\end{equation}
on $X$. It is immediate to see that this notion does not depend on the choice of reference K\"ahler form $\omega$. We will also say that a nef class $[\alpha]$ is nef and big if it satisfies
$$[\alpha]^n\cdot X=\int_X\alpha^n>0.$$

We will say that a $(1,1)$-class $[\alpha]$ is semipositive if it contains a smooth semipositive representative, namely if there exists $\vp\in C^\infty(X,\mathbb{R})$ such that $$\alpha+\ddbar\vp\geq 0,$$
on $X$. The convex cone of semipositive $(1,1)$-classes will be denoted by $\mathcal{S}\subset H^{1,1}(X,\mathbb{R})$, and we have the obvious inclusions
\begin{equation}\label{cones}
\mathcal{C}\subset\mathcal{S}\subset\ov{\mathcal{C}}.
\end{equation}
Intersecting these cones with $N^1(X)$ we obtain the corresponding notions for line bundles, so $L$ is called nef if $c_1(L)\in\ov{\mathcal{C}}$ and $L$ is called semipositive if $c_1(L)\in\mathcal{S}$. Explicitly, a holomorphic line bundle $L\to X$ is nef if
for every $\ve>0$ it admits a smooth Hermitian metric $h_\ve$ whose curvature satisfies
$$R_{h_\ve}\geq -\ve\omega,$$
on $X$, and $L$ is  semipositive if it admits a smooth Hermitian metric $h$ with
$$R_h\geq 0.$$
When $X$ is projective (or more generally Moishezon \cite{Pau}), a line bundle is nef if and only if $(L\cdot C)\geq 0$ for all curves $C\subset X$, and the latter is the usual definition in algebraic geometry.
The notion of a semipositive line bundle is classical, see e.g. \cite{KO, Fu}. Both Kobayashi-Ochiai \cite[P.519]{KO} and Fujita \cite[Conjecture 4.14]{Fu} asked whether on a projective manifold $X$ every nef line bundle is semipositive. This conjecture turned out to be false, as shown by Yau \cite[P.228]{Y1}:
\begin{theorem}[Yau \cite{Y1}]
There is a projective manifold $X$ with a nef line bundle $L$ which is not semipositive.
\end{theorem}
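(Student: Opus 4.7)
My plan is to construct an explicit example. A classical choice, due to Yau, starts from nine very general points on a smooth plane cubic. Let $C \subset \mathbb{P}^2$ be a smooth cubic (hence an elliptic curve) and pick $p_1,\dots,p_9 \in C$ so that the degree-zero line bundle
$$N := \mathcal{O}_C(3H - p_1 - \cdots - p_9) \in \mathrm{Pic}^0(C)$$
represents a \emph{non-torsion} point; this holds for a generic choice since the torsion points form a countable set. Let $\pi : X \to \mathbb{P}^2$ be the blowup at $p_1,\dots,p_9$, with exceptional divisors $E_1,\dots,E_9$, let $\tilde{C}\subset X$ be the strict transform of $C$, and set $L := \mathcal{O}_X(\tilde{C})$. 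Numerically $\tilde{C} = \pi^*(3H) - E_1 - \cdots - E_9$, so $\tilde{C}^2 = 0$ and $\tilde{C}\cdot E_i = 1$. A standard analysis of the Mori cone of the blowup then yields $\tilde{C}\cdot D \geq 0$ for every irreducible curve $D$, giving that $L$ is nef.

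Next I would show $L$ is not semipositive by contradiction: suppose $L$ admits a smooth metric $h$ with $R_h \geq 0$. Integration gives
$$0 \leq \int_{\tilde{C}} R_h = c_1(L)\cdot \tilde{C} = \tilde{C}^2 = 0,$$
and a semipositive $(1,1)$-form of zero mass on a compact Riemann surface vanishes identically, so $R_h|_{\tilde{C}}\equiv 0$. Equivalently, $h|_{\tilde{C}}$ is a flat Hermitian metric on the normal bundle $L|_{\tilde{C}} \cong N$, which is non-torsion in $\mathrm{Pic}^0(C)$ by construction. Note that $s \in H^0(X,L)$ cutting out $\tilde{C}$ is essentially unique, since $\tilde{C}^2 = 0$ combined with $\tilde{C}$ being irreducible prevents the linear system $|L|$ from moving.

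The main obstacle, and the genuinely subtle part, is converting the flatness of $h|_{\tilde{C}}$ into a global contradiction. My approach is to study the weight $\varphi := -\log|s|_h^2$ as a smooth plurisubharmonic function on $X\setminus \tilde{C}$ with controlled logarithmic singularity along $\tilde{C}$, and to examine its formal jet along $\tilde{C}$ via the exponential map into the normal bundle. The problem then reduces to the existence of a smooth psh function on a neighborhood of the zero section in the total space of a non-torsion flat degree-zero bundle over an elliptic curve, with prescribed logarithmic leading term. Lifting to the universal cover $\mathbb{C}$ of $C$ where $N$ trivializes, and performing a Fourier-mode analysis in the base direction, I expect that the semipositivity together with the twisted periodicity required by the non-torsion monodromy force all positive Fourier modes of $\varphi$ to vanish, while the expected normal growth forces them to be nonzero, producing the contradiction. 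The key point is that this step must exploit transcendental/arithmetic information in the position of the nine points: no purely intersection-theoretic or algebraic argument can suffice, since for a torsion choice of the nine points $L$ is in fact semi-ample and hence semipositive.
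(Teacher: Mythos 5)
Your construction is not the one the paper uses, and more importantly it cannot be completed as described. The nefness of $L=\mathcal{O}_X(\ti{C})$ is fine (any irreducible curve $D\neq\ti{C}$ meets $\ti{C}$ non-negatively, and $\ti{C}^2=0$), as is the deduction that a semipositive metric $h$ would restrict to a flat metric on $L|_{\ti{C}}\cong N$. But that is no contradiction: \emph{every} degree-zero line bundle on an elliptic curve carries a flat metric, torsion or not. The finer obstruction you hope to extract from the jets of $-\log|s|_h^2$ along $\ti{C}$ is exactly what Ueda theory governs, and it goes the wrong way for you: by Arnold's linearization theorem, if $N$ satisfies a Diophantine condition in $\mathrm{Pic}^0(C)$ --- which holds for a full-measure set of choices of the nine points, hence for plenty of non-torsion choices, and is not excluded by taking the points ``very general'' --- then $\ti{C}$ admits a fundamental system of neighborhoods biholomorphic to neighborhoods of the zero section of $N$, and the flat metric on $N$ transports to a metric on $L$ that is pseudoflat near $\ti{C}$ and extends to a smooth semipositive metric on $X$. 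So for the generic choice you propose, $L$ \emph{is} semipositive and your intended Fourier-mode contradiction must fail. Whether some (necessarily non-torsion, Liouville-type) configuration of the nine points makes this $L$ nef but not semipositive is a well-known open problem going back to Ueda, studied by Koike \cite{Ko1,Ko2}; it cannot serve as a proof of the theorem. Your closing remark that the argument ``must exploit transcendental/arithmetic information'' is precisely the red flag: the arithmetic condition that matters is Diophantine vs.\ Liouville, not torsion vs.\ non-torsion, and in the accessible (Diophantine) regime the conclusion is the opposite of what you need.

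The paper's example, due to Yau \cite{Y1}, is structurally different and avoids this issue entirely: $X=\mathbb{P}(E)$ where $E$ is the nontrivial extension of $\mathcal{O}_C$ by $\mathcal{O}_C$ over an elliptic curve $C$, and $L=\mathcal{O}_{\mathbb{P}(E)}(1)$. There the mechanism is the non-split extension class in $H^1(C,\mathcal{O}_C)$ rather than a point of $\mathrm{Pic}^0(C)$: as Demailly--Peternell--Schneider \cite{DPS} showed, $c_1(L)$ contains a \emph{unique} closed positive current, namely the current of integration along the section of $\mathbb{P}(E)\to C$ determined by the sub-bundle $\mathcal{O}_C\subset E$; since that current is not smooth, no smooth semipositive representative of $c_1(L)$ can exist. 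If you want a self-contained proof, that is the example to develop.
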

This counterexample is on a ruled surface $X=\mathbb{P}(E)$ where $E$ is a rank $2$ vector bundle over an elliptic curve $C$ which is a nontrivial extension of $\mathcal{O}_C$ by itself, and $L=\mathcal{O}_{\mathbb{P}(E)}(1)$ is the Serre line bundle. This same counterexample was later rediscovered by Demailly-Peternell-Schneider \cite{DPS}, who also proved that $c_1(L)$ contains a unique closed positive current (see below for definitions). In particular, the inclusions in \eqref{cones} are all strict in general (this is clear for the first inclusion), and the cone $\mathcal{S}$ is neither open nor closed.

In this example we have $\int_X c_1(L)^2=0$, i.e. $L$ is not big. The first example of a nef and big line bundle which is not semipositive was found by Kim \cite[Ex. 2.14]{Kim} on a ruled $3$-fold, and later Boucksom-Eyssidieux-Guedj-Zeriahi \cite[Example 5.4]{BEGZ} observed that one can also find other examples by considering $\mathbb{P}(E\oplus A)$ where $A\to C$ is ample, with its Serre line bundle. The question remained whether one could find an example of a nef and big but not semipositive line bundle on a surface, and in \cite[Problem 2.2]{FT} it was suggested that such an example should exist on a ruled surface constructed by Grauert \cite[8(d), p.365-366]{Gr}. This expectation was realized by Koike \cite{Ko2}, who showed that this is indeed the case:

\begin{theorem}[Koike \cite{Ko2}]
The ruled surface $X$ constructed by Grauert admits a nef and big line bundle $L$ which is not semipositive.
\end{theorem}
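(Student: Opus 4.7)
The plan is to exhibit a concrete nef and big line bundle $L$ on Grauert's ruled surface $X$, and then to obstruct the existence of a smooth Hermitian metric on $L$ with semipositive curvature by invoking the Ueda-theoretic obstructions built into Grauert's construction.

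Recall the setup: $X=\mathbb{P}(\mathcal{O}_C\oplus F)$ with projection $\pi\colon X\to C$, where $C$ is an elliptic curve and $F$ is a topologically trivial holomorphic line bundle whose $U(1)$-monodromy is given by a Liouville irrational $\alpha\in\mathbb{R}/\mathbb{Z}$, i.e.\ for every $N\geq 1$ there are infinitely many rationals $p/q$ with $|\alpha-p/q|<q^{-N}$. The splitting of $\mathcal{O}_C\oplus F$ yields two disjoint sections $\sigma_0,\sigma_\infty$ of $\pi$, both with trivial self-intersection. For the candidate I would take $L=\mathcal{O}_X(\sigma_0)\otimes\pi^*A$ for an ample line bundle $A\to C$ of degree $a>0$. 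Since $\sigma_0\cdot\sigma_\infty=0$, $\sigma_0\cdot f=1$ for a fiber $f$, and $f^2=0$, one computes $L^2=2a>0$, so $L$ is big. Nefness follows by pairing $L$ with $\sigma_0$ (yielding $a>0$) and with every other irreducible curve $C'\subset X$, on which $\sigma_0\cdot C'\geq 0$ (distinct irreducible curves) and $\pi^*A\cdot C'\geq 0$ (pullback of nef) both hold.

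Now suppose for contradiction that $L$ admits a smooth Hermitian metric $h$ with $R_h\geq 0$. Let $s$ denote the canonical section of $\mathcal{O}_X(\sigma_0)$ vanishing along $\sigma_0$; on a tubular neighborhood $U$ of $\sigma_0$ the function $\psi:=-\log|s|^2_h$ is plurisubharmonic on $U\setminus\sigma_0$ with a logarithmic pole along $\sigma_0$. Using the normal bundle identification $N_{\sigma_0/X}\cong F$ and expanding $\psi$ order by order in a formal transverse coordinate, after taking horizontal averages along the $\mathbb{P}^1$-fibers one extracts from $h$ a formal linearization of the germ $(X,\sigma_0)$ as a neighborhood of the zero section of the total space of $F$; this is the analytic incarnation of Ueda's classical construction, as developed further in Koike's earlier works on this subject.

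The main step is then to show that no such linearization can exist on Grauert's example. The obstructions to linearizing $(X,\sigma_0)$ order by order live in the cohomology groups $H^1(C,F^{\otimes k})$ for $k\geq 1$, and the Liouville assumption on $\alpha$ is precisely the condition that makes these spaces host arbitrarily small nonzero elements, via the small-divisor phenomenon underlying Grauert's construction. Hence the formal linearization produced above cannot exist, yielding the desired contradiction. The technical crux, and the main obstacle, is to rigorously derive the implication \emph{semipositive metric on $L$} $\Longrightarrow$ \emph{formal linearization of the germ $(X,\sigma_0)$}: making this precise requires a careful analysis of how the curvature positivity of $h$ propagates into the infinite-order jet of $\psi$ transverse to $\sigma_0$, and this is precisely where Koike's Ueda-theoretic machinery enters.
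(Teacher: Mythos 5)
The paper itself offers no proof of this statement: it is a survey item quoted from Koike \cite{Ko2}, with only the historical remark that \cite{FT} had proposed Grauert's surface as the likely source of such an example. So your proposal has to be judged on its own terms, and it has a fatal problem before the Ueda-theoretic part even starts: you have misidentified Grauert's surface. You take $X=\mathbb{P}(\mathcal{O}_C\oplus F)$ with $F$ unitary flat with Liouville monodromy. On that surface the statement you are trying to prove is \emph{false}. Since $F$ is unitary flat, $X$ is a flat $\mathbb{P}^1$-bundle, i.e.\ a quotient of $\mathbb{C}\times\mathbb{P}^1$ by a group acting on the $\mathbb{P}^1$-factor through $U(1)\subset PU(2)$ fixing $0$ and $\infty$; the Fubini--Study metric on $\mathcal{O}_{\mathbb{P}^1}(1)$ is invariant under this action and descends to a smooth metric on $\mathcal{O}_X(\sigma_0)$ whose curvature is the fiberwise Fubini--Study form, hence semipositive. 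Consequently your $L=\mathcal{O}_X(\sigma_0)\otimes\pi^*A$ is semipositive (indeed every nef class on the split bundle is semipositive), and your intended contradiction ``the germ $(X,\sigma_0)$ cannot be linearized'' is vacuous, because for the split bundle that germ is \emph{literally} a neighborhood of the zero section of $F$. Grauert's example \cite[8(d)]{Gr} is precisely a ruled surface whose transition functions are nonlinear in the fiber coordinate, with linear part the Liouville flat bundle, so that the section has unitary flat normal bundle while its germ in $X$ is \emph{not} biholomorphic to the linear model; that nonlinearity is the entire source of the phenomenon.

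Even after correcting the surface, the mechanism you invoke for the contradiction is not right. For a degree-zero non-torsion line bundle $F$ on an elliptic curve one has $H^1(C,F^{\otimes(-k)})=0$ for every $k\geq 1$ (by Riemann--Roch, since $h^0(C,F^{\otimes k})=0$), so \emph{all} of Ueda's obstruction classes vanish and the formal linearization always exists; the spaces you say ``host arbitrarily small nonzero elements'' are zero. The Liouville condition does not create nonzero obstructions --- it makes the norm of the solution operator for the cohomological equation $\delta c = u_k$ grow super-polynomially in $k$ (small divisors), so that the formal linearization fails to \emph{converge}. Your conclusion ``hence the formal linearization cannot exist'' therefore does not follow, and the contradiction does not close as stated: what must be shown is that a smooth semipositive metric forces a \emph{convergent} linearization (of the transition functions of $L$, resp.\ of the neighborhood), which is then incompatible with Grauert's small-divisor construction. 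You candidly flag this implication as ``the technical crux'' to be supplied by ``Koike's Ueda-theoretic machinery''; since that implication is the entire content of \cite{Ko2}, the proposal as written is a sketch that defers the theorem to the reference rather than a proof, and the parts that are filled in (the choice of $X$ and the cohomological obstruction argument) are incorrect.
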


\section{Semiample $(1,1)$-classes}

\subsection{Semiample line bundles}

A holomorphic line bundle $L\to X$ over a compact K\"ahler manifold is called semiample if there is $m\geq 1$ such that $L^m$ is globally generated.  In this case, global sections of $L^m$ define a holomorphic map $\Phi:X\to \mathbb{P}^N$ such that $\Phi^*\mathcal{O}(1)=L^m$, and so $\frac{1}{m}\Phi^*\omega_{\rm FS}$ is a smooth semipositive representative of $c_1(L)$. Thus, semiample line bundles are semipositive. The converse implication is false, for example by taking $X$ to be an elliptic curve and $L\in \mathrm{Pic}^0(X)$ a degree-zero non-torsion line bundle. Then $c_1(L)=0$ in $H^{1,1}(X,\mathbb{R})$, so $L$ is trivially semipositive, but $L$ is not semiample.

A more interesting example is given in \cite[Example 10.3.3]{Laz}, of a ruled surface with two nef and big line bundles $L,L'$ with $c_1(L)=c_1(L')$ and $L$ semiample but $L'$ not semiample (so in this case $L'$ is semipositive but not semiample).
What these examples show is that being semiample is not a numerical notion, i.e. it does not only depend on $c_1(L)$. The following definition, which appears in \cite{LP}, is then natural:

\begin{defn}
A line bundle $L\to X$ over a compact K\"ahler manifold $X$ is called numerically semiample if there is a semiample line bundle $L'$ with $c_1(L)=c_1(L')$ in $H^{1,1}(X,\mathbb{R})$.
\end{defn}
From the exponential exact sequence, we see that if $b_1(X)=0$ then every numerically semiample line bundle is actually semiample. Indeed, since $c_1(L\otimes L'^*)=0$ in $H^2(X,\mathbb{R})$, it follows that $c_1(L^k\otimes (L'^*)^k)=0$ in $H^2(X,\mathbb{Z})$ for some $k\geq 1$. Since $X$ is K\"ahler, the assumption $b_1(X)=0$ implies $H^1(X,\mathcal{O})=0$, so the first Chern class map $\mathrm{Pic}(X)\to H^2(X,\mathbb{Z})$ is injective, and so $L\cong L'\otimes M$ where $M$ is a holomorphically torsion line bundle. Since $L'$ is semiample, so is $L'\otimes M$, proving our claim.

We also clearly have that if $L$ is numerically semiample then $L$ is semipositive, and one can ask whether the converse holds. Unfortunately, we have

\begin{theorem}[Koike \cite{Ko1}]
There is a projective manifold $X$ with a semipositive line bundle $L$ which is not numerically semiample.
\end{theorem}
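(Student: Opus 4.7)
The plan is to construct the counterexample in the same spirit as the preceding theorems of this section, on a ruled surface $\pi\colon X=\mathbb{P}(E)\to C$ over an elliptic curve $C$, with $E$ a carefully chosen rank-$2$ bundle and $L$ a line bundle of the form $\mathcal{O}_{\mathbb{P}(E)}(1)\otimes \pi^*N$ for some $N\in\mathrm{Pic}(C)$. The two requirements are to produce a smooth semipositive Hermitian metric on $L$, and to rule out semiamplitude for \emph{every} twist $L\otimes \pi^*P$ with $P\in \mathrm{Pic}^0(C)$, since by irregularity $q(X)=1$ these twists exhaust the line bundles with the prescribed Chern class.

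For the first requirement, I would try to build the metric as a combination of a fiberwise Fubini--Study metric coming from a flat Hermitian-Einstein structure on $E$, modified by a $\ddbar$ of a carefully chosen function. Taking $E$ semistable of degree zero should make $\mathcal{O}_{\mathbb{P}(E)}(1)$ inherit a Hermitian metric that is fiberwise flat and nonnegatively curved horizontally, so that a small $\ddbar$-perturbation kills any residual negativity. The outcome I would aim for is a smooth $h$ with $R_h\geq 0$ on all of $X$ that degenerates precisely along a distinguished section $D\subset X$.

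For the second requirement, I would aim to show that every section of $(L\otimes \pi^*P)^m=\mathcal{O}_{\mathbb{P}(E)}(m)\otimes \pi^*(N\otimes P)^m$ vanishes identically along $D$, so $D$ lies in the stable base locus of each twist and no power can be globally generated. By the Leray spectral sequence for $\pi$ and the identification $\pi_*\mathcal{O}_{\mathbb{P}(E)}(m)=\mathrm{Sym}^m E$, this reduces to analyzing the restriction map $H^0(C,\mathrm{Sym}^m E\otimes M^m)\to H^0(D,\cdot)$ along the sub-line bundle on $C$ corresponding to $D$, uniformly as $M=N\otimes P$ ranges over $\mathrm{Pic}^0(C)$. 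The vanishing of all such restriction maps can be arranged by taking $E$ to be a nonsplit extension and imposing a Diophantine non-resonance condition on $(C,E,N)$ with respect to the whole family $\{P\in\mathrm{Pic}^0(C)\}$, in the spirit of Ueda-type conditions on the periods of $C$.

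The main obstacle is that the two requirements pull in opposite directions: semipositivity asks for as much curvature positivity as possible, whereas the \emph{uniform} failure of semiamplitude across the positive-dimensional family $\mathrm{Pic}^0(C)$ demands a very strong transcendence-theoretic obstruction to the existence of holomorphic sections. Threading both conditions through the same example — in particular, verifying that the Diophantine hypothesis needed to kill semiamplitude of every twist is still compatible with the explicit construction of a smooth semipositive metric, rather than merely of singular positive currents — is the heart of Koike's construction, and is the step I would expect to consume most of the proof.
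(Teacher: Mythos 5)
Your proposal runs into a genuine obstruction at its central step. You require the rank-$2$ bundle $E$ on the elliptic curve $C$ to be simultaneously a \emph{nonsplit} extension (so that a Ueda/Diophantine condition can obstruct sections of all twists) and to carry a \emph{flat Hermitian--Einstein structure} (so that the fiberwise Fubini--Study metric gives a smooth semipositive metric on $\mathcal{O}_{\mathbb{P}(E)}(1)$). For degree-zero rank-$2$ bundles on an elliptic curve these two demands are incompatible: by Narasimhan--Seshadri only polystable bundles admit flat unitary structures, and the only nonsplit semistable extension with degree-zero pieces is Atiyah's nontrivial extension of $\mathcal{O}_C$ by $\mathcal{O}_C$ (note that $\mathrm{Ext}^1(P,\mathcal{O}_C)=H^1(C,P^{-1})=0$ for $P\in\mathrm{Pic}^0(C)$ nontrivial). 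That bundle is exactly Yau's example quoted as the first theorem of this survey, for which $\mathcal{O}_{\mathbb{P}(E)}(1)$ is nef but \emph{not} semipositive; indeed by Demailly--Peternell--Schneider its class contains a unique closed positive current, which is singular. So the smooth semipositive metric you plan to build in the nonsplit case does not exist, and the section-killing step is in any case only asserted (``can be arranged by imposing a Diophantine non-resonance condition''), not carried out.

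The paper's example is entirely different: it is Zariski's surface, the blowup of $\mathbb{P}^2$ at $12$ general points of an elliptic curve $C\subset\mathbb{P}^2$, with $L$ the sum of the pullback of $\mathcal{O}(1)$ and the strict transform of $C$. There the failure of numerical semiampleness is the \emph{easy} half ($L|_C$ has degree zero and is non-torsion, so every section of $L^m$ vanishes along $C$, and simple connectivity leaves no twists to try), while the whole content of Koike's theorem is the construction of the smooth semipositive metric near $C$ via Ueda theory --- i.e.\ the Diophantine condition enters on the opposite side of the argument from where you place it. I will add that the split version of your own setup, $E=\mathcal{O}_C\oplus P$ with $P\in\mathrm{Pic}^0(C)$ non-torsion, actually does yield a correct (and much simpler) counterexample: the flat unitary structure gives $R_h\geq 0$ on $\mathcal{O}_{\mathbb{P}(E)}(1)$, while $H^0(X,\mathcal{O}_{\mathbb{P}(E)}(m)\otimes\pi^*Q^{m})\cong\bigoplus_{k=0}^m H^0(C,P^{k}\otimes Q^{m})$ has dimension at most $1$ for every $m\geq 1$ and every $Q\in\mathrm{Pic}^0(C)$, since at most one $k$ can satisfy $P^{k}\otimes Q^{m}\cong\mathcal{O}_C$; hence no twist is semiample and no transcendence condition is needed. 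But that is not the argument you wrote, and it is not Koike's.
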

The manifold $X$ and the line bundle $L$ come from a famous example of Zariski: $X$ is the blowup of $\mathbb{P}^2$ along $12$ general points on an elliptic curve $C\subset\mathbb{P}^2$, and the line bundle $L$ is the sum of the pullback of the hyperplane bundle $\mathcal{O}(1)$ on $\mathbb{P}^2$ and the strict transform of $C$. Zariski famously showed that $L$ is not semiample, and since $X$ is simply connected it follows that $L$ is also not numerically semiample, and Koike \cite[Theorem 1.1]{Ko1} shows that $L$ is semipositive.

Nevertheless, despite the fact that (numerical) semiampleness is not the same as semipositivity, showing that a line bundle is semiample remains the easiest way to show that it is semipositive. Thus, theorems that guarantee that a line bundle is semiample are very valuable, and the most prominent such result is the Kawamata-Shokurov base-point-free theorem in birational geometry (which we state here in its simplest form):

\begin{theorem}\label{bpf}
Let $X$ be a projective manifold and $L\to X$ a nef line bundle such that $L^m\otimes K_X^{*}$ is nef and big for some $m\geq 1$. Then $L$ is semiample.
\end{theorem}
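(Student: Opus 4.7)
The plan is to show that some power $L^k$ is globally generated, proceeding by induction on the dimension of the (stable) base locus of $|L^k|$; the engine throughout is Kawamata--Viehweg/Nadel vanishing applied to cleverly chosen multiplier ideals.

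First, I would establish non-vanishing. For every $k\geq m$, we can write
\[
L^k = K_X \otimes \bigl(L^k\otimes K_X^{*}\bigr),\qquad L^k\otimes K_X^{*} = (L^m\otimes K_X^{*}) + (k-m)L,
\]
which is a sum of a nef and big $\mathbb{Q}$-class with a nef class, hence nef and big. Kawamata--Viehweg vanishing gives $H^i(X,L^k)=0$ for $i>0$, and asymptotic Riemann--Roch together with bigness of $L^m\otimes K_X^{*}$ forces $h^0(X,L^k)\to\infty$, so in particular $L^k$ has sections for $k\gg 0$.

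Second, and this is the heart of the proof, I would show that no point $x\in X$ can lie in $\mathrm{Bs}(|L^k|)$ for all sufficiently divisible $k$. Assuming otherwise, since $L^m\otimes K_X^{*}$ is nef and big, Kodaira's lemma produces an effective $\mathbb{Q}$-divisor $D_0\sim_{\mathbb{Q}} L^{mN}\otimes K_X^{*N}$ (for some $N$) with high multiplicity at $x$, so that for a suitable rational $c\in(0,1]$ the multiplier ideal $\mathcal{J}(cD_0)$ is nontrivial at $x$. The technical step (Shokurov's \emph{tie-breaking}, or Kawamata's concentration method) is to perturb $cD_0$ by adding small multiples of auxiliary divisors $D'\sim_{\mathbb{Q}} L^{a}\otimes K_X^{*}$ and rescaling the log canonical threshold, so as to obtain a new effective $\mathbb{Q}$-divisor $D\sim_{\mathbb{Q}} L^{M}\otimes K_X^{*}$ whose multiplier ideal $\mathcal{J}(D)$ has a single isolated point of co-support exactly at $x$. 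For $k\geq M$, the class $L^k\otimes \mathcal{J}(D)^{-1}$-twist fits into
\[
L^k - D \sim_{\mathbb{Q}} K_X + (k-M)L,
\]
to which Nadel vanishing applies, giving $H^1(X,\mathcal{J}(D)\otimes L^k)=0$. The short exact sequence $0\to\mathcal{J}(D)\otimes L^k\to L^k\to (\mathcal{O}_X/\mathcal{J}(D))\otimes L^k\to 0$ then shows the restriction map
\[
H^0(X,L^k)\twoheadrightarrow H^0\bigl(X,L^k\otimes\mathcal{O}_X/\mathcal{J}(D)\bigr)
\]
is surjective; since the cosupport is an isolated point at $x$, we produce a section of $L^k$ not vanishing at $x$, contradicting $x\in\mathrm{Bs}(|L^k|)$.

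Third, one upgrades this pointwise statement to genuine base-point-freeness: an elementary argument (comparing $|L^{k}|$ and $|L^{k'}|$ for two coprime $k,k'$ that each have a section nonvanishing at $x$) shows that $\mathrm{Bs}(|L^{k}|)=\varnothing$ for some fixed $k$, giving semiampleness. The main obstacle is clearly the tie-breaking construction in the second paragraph: one must ensure both that the multiplier ideal cosupport is \emph{isolated} at $x$ and that the residual class after subtracting $D$ remains of the form $K_X + \text{nef}$, which forces a delicate numerical balancing. When the initial non-klt locus has positive dimension, this balancing needs an inductive step along the lines of the Kawamata non-vanishing theorem applied to a minimal log canonical center — that inductive use of the theorem on lower-dimensional subschemes is what makes the whole argument substantially harder than the surface or isolated-base-point case.
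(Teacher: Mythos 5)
The paper does not actually prove this statement: Theorem \ref{bpf} is quoted as the classical Kawamata--Shokurov base-point-free theorem from birational geometry, with no argument given, so there is no in-paper proof to compare against. Judged on its own, your sketch names the right tools but has a concrete gap in the very first step. Kawamata--Viehweg vanishing does give $h^0(X,L^k)=\chi(X,L^k)$ for $k\geq m$, but asymptotic Riemann--Roch only yields positivity of this Euler characteristic when $L$ itself is big, since the leading term is $(L^n)k^n/n!$. In the essential case where $L$ is nef but not big, $\chi(X,L^k)$ is a polynomial of degree $<n$ in $k$ whose positivity is not automatic, and the claim $h^0(X,L^k)\to\infty$ is simply false (take $X$ Fano and $L=\mathcal{O}_X$: all hypotheses hold and $h^0(X,L^k)=1$ for all $k$). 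This is precisely why Shokurov's Non-vanishing Theorem is a separate, difficult ingredient, proved by its own induction on dimension via the concentration method; it cannot be replaced by Riemann--Roch.

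The second step also overreaches. Isolating the cosupport of a multiplier ideal at a prescribed point $x$ is an Angehrn--Siu-type argument, and the cutting-down procedure requires positivity of $\bigl((L^a\otimes K_X^{*})^{\dim Z}\cdot Z\bigr)$ for the subvarieties $Z$ through $x$ arising as non-klt centers; this fails when $x$ lies on the null locus of the nef and big class, which is exactly where the base locus of $|L^k|$ tends to sit. The standard proof does not isolate points: it takes a log resolution of the base ideal of $|L^k|$, arranges (after perturbation) a divisor $F$ with coefficient exactly $-1$ mapping into the base locus, applies vanishing to lift sections from $F$, and invokes Non-vanishing inductively to guarantee that $F$ carries sections. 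You correctly flag that an induction on lower-dimensional centers is needed, but that induction, together with Non-vanishing, is the actual content of the theorem rather than a technical refinement of the outline you give.
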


In particular, if $c_1(K_X)=0$ in $H^{1,1}(X,\mathbb{R})$ (i.e. $X$ is a projective Calabi-Yau manifold), then every nef and big line bundle is semiample, hence semipositive.

\subsection{Semiample $(1,1)$-classes}
Suppose $L\to X$ is a semiample line bundle. Then if $m\geq 1$ is sufficiently divisible, then global sections of $L^m$ define a holomorphic map $\Phi:X\to\mathbb{P}^N$ with image a normal projective variety $Y$, such that $L^m$ is the pullback of an ample line bundle on $Y$. Passing to a Stein factorization, we may also assume that $\Phi:X\to Y$ has connected fibers.

Inspired by this, in \cite{FT} we posed the following definition:
\begin{defn}
A $(1,1)$-class $[\alpha]$ on a compact K\"ahler manifold $X$ is called  semiample if there is a surjective holomorphic map $f:X\to Y$ with connected fibers onto a normal compact K\"ahler analytic space such that $[\alpha]=f^*[\beta]$ for some K\"ahler class $[\beta]$ on $Y$.
\end{defn}

Clearly, if $L$ is numerically semiample then $c_1(L)$ is semiample, and we can ask about the converse:

\begin{question}\label{bzp} If $L\to X$ is a line bundle over a compact K\"ahler manifold such that $c_1(L)$ is semiample, does it follow that $L$ is numerically semiample?
\end{question}

At the moment, it is not clear to us whether the answer is affirmative, even if we assume that $X$ is projective.  In general, if $c_1(L)$ is semiample then we have a surjective map with connected fibers $f:X\to Y$ with $c_1(L)=f^*[\beta]$, with $[\beta]$ a K\"ahler class on $Y$. We necessarily have $\dim Y\leq \dim X$. Let us make a few observations about Question \ref{bzp}.

If $\dim Y=0$, i.e. $Y$ is a point, then $c_1(L)=0$ in $H^{1,1}(X,\mathbb{R})$, so the answer to Question \ref{bzp} is trivially affirmative in this case, by taking $L'=\mathcal{O}_X$.

If $\dim Y=1$ then Question \ref{bzp} again has an affirmative answer as follows: in this case $Y$ is a smooth compact Riemann surface, so there is an ample line bundle $A\to Y$ such that $[\beta]=\lambda c_1(A)$ for some $\lambda\in\mathbb{R}_{>0}$. Thus
\begin{equation}\label{useless}
c_1(L)=\lambda c_1(f^*A).
\end{equation}
The class $c_1(f^*A)$ lies in $H^2(X,\mathbb{Z})$, and if we denote by $[B]\in H_2(X,\mathbb{Z})$ its Poincar\'e dual, then intersecting \eqref{useless} with $[B]$ gives
$$\lambda = c_1(L)\cdot [B]\in\mathbb{Z},$$
so we see that $\lambda\in\mathbb{N}_{>0}$. Thus,
$$L':=f^*A^{\otimes\lambda},$$
is a semiample line bundle on $X$ with $c_1(L)=c_1(L')$, and so $L$ is numerically semiample.

If $\dim Y=\dim X$ then $f$ is bimeromorphic and $L$ is nef and big, and so $X$ is Moishezon and K\"ahler, hence projective, and $Y$ is a Moishezon space. By a theorem of Nakamaye \cite{Nak}, the augmented base locus $\mathbb{B}_+(L)$ equals the null locus
$$\mathrm{Null}(L)=\bigcup_{\substack{V\subset X\\ (L^{\dim V}\cdot V)=0}}V.$$
From \cite[Prop.2.5]{To} it follows that $$\mathrm{Exc}(f)=\mathbb{B}_+(L)=\mathrm{Null}(L).$$
At this point we would expect that in fact $Y$ is projective, and that there is an ample line bundle $A\to Y$ such that $c_1(A)$ is proportional to $[\beta]$, which (by the argument above) would indeed show that $L$ is numerically semiample.

\subsection{Transcendental base-point-free conjecture}
Motivated by the base-point-free Theorem \ref{bpf}, the author conjectured in \cite[Question 5.5]{To3} a transcendental extension of this to $(1,1)$-classes on Calabi-Yau manifolds, which together with a later conjecture of Filip and the author \cite[Conjecture 1.2]{FT} reads:

\begin{conjecture}\label{c2}
Let $X$ be a compact K\"ahler manifold and $[\alpha]$ a nef $(1,1)$-class such that $\lambda[\alpha]-c_1(K_X)$ is nef and big for some $\lambda\in\mathbb{R}_{>0}$. Then $[\alpha]$ is semiample. In particular, if $c_1(K_X)=0$ in $H^{1,1}(X,\mathbb{R})$ (i.e. $X$ is a Calabi-Yau manifold), then every nef and big $(1,1)$-class is semiample, hence semipositive.
\end{conjecture}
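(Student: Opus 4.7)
The strategy is to build a transcendental analog of the Kawamata-Shokurov base-point-free Theorem \ref{bpf}, with sections of line bundles replaced by positive closed currents and algebraic contractions replaced by contractions from the K\"ahler MMP. I would focus first on the Calabi-Yau case stated in the second sentence, since that is where the novel transcendental phenomena are visible; a reduction of the general statement to this model problem would then require an additional transcendental Shokurov-type non-vanishing argument, built from Demailly's holomorphic Morse inequalities in place of Kodaira vanishing, together with the Bogomolov-Tsuji-type estimate provided by the $\lambda[\alpha]-c_1(K_X)$ hypothesis.

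\emph{Step 1 (K\"ahler current and null locus).} In the Calabi-Yau case $[\alpha]$ is itself nef and big, so Demailly regularization combined with the Demailly-Paun mass concentration technique produces a K\"ahler current $T=\alpha+\ddbar\vp\in[\alpha]$ with analytic singularities along a proper analytic subvariety, whose minimal locus is the non-K\"ahler locus $E_{nK}([\alpha])$. By the transcendental Nakamaye theorem of Collins-Tosatti, this coincides with the null locus $\mathrm{Null}([\alpha])=\bigcup_{V:\ \int_V\alpha^{\dim V}=0}V$. Solving the degenerate complex Monge-Amp\`ere equation $(\alpha+\ddbar\vp)^n=c\,\Omega$ via Boucksom-Eyssidieux-Guedj-Zeriahi then produces a singular Ricci-flat K\"ahler current with bounded potential, smooth away from $E_{nK}([\alpha])$, which encodes the expected metric geometry of the sought-after contraction.

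\emph{Step 2 (Constructing the contraction).} The heart of the proof is to produce a surjective holomorphic map $f:X\to Y$ with connected fibers onto a normal compact K\"ahler analytic space that contracts exactly $E_{nK}([\alpha])$, together with a K\"ahler class $[\beta]$ on $Y$ such that $[\alpha]=f^*[\beta]$. Geometrically, the fibers of $f$ should be the maximal compact analytic subvarieties of $X$ on which $[\alpha]$ restricts to a class of zero volume; one can attempt to realize $Y$ either as the metric completion of the Ricci-flat current from Step 1, or by a direct analytic construction via Grauert-type contractibility criteria applied to the irreducible components of $E_{nK}([\alpha])$. Once $f$ is in hand, descending $T$ to $Y$ and checking that the resulting class is K\"ahler is a positivity computation that uses the definition of $\mathrm{Null}$ together with the Demailly-Paun characterization of the K\"ahler cone on normal K\"ahler spaces.

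\textbf{Main obstacle.} The crucial difficulty is Step 2. In the projective setting the contraction comes for free from semiampleness of a large power of $L$ via sections and Stein factorization, whereas transcendentally no sections are available and the analytic space $Y$ must be built by hand. This is a K\"ahler MMP question currently out of reach in general: it is known for K\"ahler threefolds (H\"oring-Peternell, Das-Hacon) and for hyperk\"ahler manifolds (Bakker-Lehn-Huybrechts), but not on arbitrary compact K\"ahler manifolds of dimension $\geq 4$. A full proof of Conjecture \ref{c2} therefore seems to require either a substantial extension of the K\"ahler MMP, producing contractions for nef and big transcendental $(1,1)$-classes, or an entirely new direct analytic construction of $Y$ from the degenerate Monge-Amp\`ere metric -- and in either route, the passage from the metric collapse picture to a genuine normal compact K\"ahler analytic structure on the quotient is the essential open point.
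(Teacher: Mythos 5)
The statement you are trying to prove is Conjecture \ref{c2}; the paper does not prove it, and indeed presents it as open in general, with only the cases $n\leq 2$ (\cite{FT}) and $n=3$ (\cite{TZ,Ho,DH}) known. Your proposal is not a proof either: you explicitly leave Step 2 open, and Step 2 \emph{is} the conjecture. Everything in Step 1 --- the existence of a K\"ahler current with analytic singularities along $E_{nK}([\alpha])$, the identification $E_{nK}([\alpha])=\mathrm{Null}([\alpha])$ from \cite{CT}, and the singular Ricci-flat current from \cite{BEGZ} --- is genuinely known, but it is preparatory material that is already available in the literature and does not by itself produce the map $f:X\to Y$. The construction of a normal compact K\"ahler analytic space $Y$ contracting exactly $\mathrm{Null}([\alpha])$, together with a K\"ahler class $[\beta]$ on $Y$ pulling back to $[\alpha]$, is precisely the missing content; your two suggested routes (metric completion of the degenerate Ricci-flat metric, or Grauert-type contractibility of the components of $E_{nK}([\alpha])$) are plausible headings but neither is carried out, and each runs into known hard problems (respectively: showing the metric completion is an analytic space at all, and the failure of Grauert's criterion to apply to higher-dimensional non-negative normal bundles without further input).

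Two further points. First, your proposed reduction of the general adjoint case to the Calabi--Yau case via a ``transcendental Shokurov non-vanishing built from holomorphic Morse inequalities'' is not a reduction: in the projective base-point-free theorem, non-vanishing is a separate deep ingredient relying on Riemann--Roch and vanishing theorems applied to an honest line bundle, and no transcendental substitute is known for an irrational class $[\alpha]$, which carries no sections whatsoever. Second, your strategy does coincide with what the paper identifies as the current approach --- contracting $E_{nK}([\alpha])$ using progress on the K\"ahler MMP (\cite{FT,Ho,DH,DH2,DHP}) --- so as a research program it is aligned with the state of the art; but as a proof it has a gap that is exactly the open problem, and it should not be presented as establishing the statement.
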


Conjecture \ref{c2} is known for $n\leq 2$ thanks to work of Filip and the author \cite{FT}, and the Calabi-Yau special case is also known when $n=3$ by work of H\"oring \cite{Ho}. The case when $n=3$ of Conjecture \ref{c2} when $\lambda[\alpha]-c_1(K_X)$ is K\"ahler was proved by Zhang and the author \cite{TZ} when $[\alpha]$ is not big and by H\"oring \cite{Ho} when $[\alpha]$ is big. The assumption that $\lambda[\alpha]-c_1(K_X)$ is K\"ahler is actually quite natural, since it is equivalent to $[\alpha]$ being the limiting class of some solution of the K\"ahler-Ricci flow on $X$ with finite time singularity, see \cite{TZ}.  More recently, Conjecture \ref{c2} with $n=3$ was proved in general by Das-Hacon \cite{DH}. Of course, when $X$ is projective and $[\alpha]\in N^1(X)$ then Conjecture \ref{c2} follows from Theorem \ref{bpf}.

In general, work of Collins and the author \cite{CT} shows that the non-K\"ahler locus $E_{nK}([\alpha])$ of a nef and big $(1,1)$-class $[\alpha]$ is a proper closed analytic subvariety of $X$, and when $[\alpha]$ is semiample then $E_{nK}([\alpha])$ should be equal to the exceptional locus of the map $f$ (this holds when $Y$ is smooth, by  \cite[Prop.2.5]{To}, and also in the case of semiample line bundles \cite[Theorem A]{BCL}). Thus, at least in the Calabi-Yau setting of Conjecture \ref{c2}, one would hope to construct $f$ by ``contracting'' $E_{nK}([\alpha])$. In a nutshell, this is what is done in \cite{FT,Ho} in low dimensions, and at this moment the best hope for extending this to all dimensions comes from recent progress in the Minimal Model Program for K\"ahler manifolds by Das, Hacon and coauthors, see e.g. \cite{DH,DH2, DHP}.

\subsection{Generalized abundance conjecture}
In the base-point-free Theorem \ref{bpf}, the line bundle $L^m\otimes K_X^{*}$ is assumed to be nef and big. If the bigness assumption is dropped, Lazi\'c-Peternell \cite{LP} recently proposed the following ``generalized abundance conjecture'', which again state in its simplest form:
\begin{conjecture}[Lazi\'c-Peternell \cite{LP}]\label{lp}
Let $X$ be a projective manifold with $K_X$ pseudoeffective, and $L\to X$ a nef line bundle such that $L^m\otimes K_X^{*}$ is nef for some $m\geq 1$. Then $L$ is numerically semiample.
\end{conjecture}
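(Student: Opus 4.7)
My plan is to reduce Conjecture \ref{lp} to a version of the classical abundance conjecture via the minimal model program. Write $N := L^m\otimes K_X^*$, which is nef by hypothesis, so that $mL \equiv K_X + N$ numerically in $N^1(X)_{\mathbb{R}}$, i.e. $mL$ is a ``generalized adjoint'' class. Since $X$ is projective and $K_X$ is pseudoeffective, assuming the existence of minimal models (a consequence of BCHM-type results when $K_X$ is in addition big, and conjectural otherwise), there is a $K_X$-MMP $\sigma : X \dashrightarrow X'$ terminating on a model with $K_{X'}$ nef. The first task is to show that $L$ descends through this sequence of divisorial contractions and flips to a nef line bundle $L'$ on $X'$ with $(L')^m\otimes K_{X'}^*$ still nef. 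Since $L$ is nef, every $K_X$-negative extremal ray $R$ satisfies $L\cdot R \geq 0$ and $(mL-K_X)\cdot R \geq 0$, but $L\cdot R > 0$ is possible, so $L$ need not be numerically trivial on the contracted loci. To force descent, I would try a two-stage process: first contract $L$-trivial extremal rays (using the cone theorem together with a length estimate for $(K+\varepsilon L)$-negative rays for small $\varepsilon>0$), and only then run the $K$-MMP on the resulting space.

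Assuming this descent has been carried out, apply the classical abundance conjecture to $X'$ to conclude that $K_{X'}$ is semiample, yielding an Iitaka fibration $\pi : X' \to Y'$ whose general fiber $F$ is Calabi-Yau with $L'|_F$ nef. One is then reduced to showing that $L'|_F$ is numerically semiample, which is precisely the Calabi-Yau case of Conjecture \ref{lp}. When $L'|_F$ is additionally big, the Calabi-Yau case of Conjecture \ref{c2} applies (known for $n\leq 3$); otherwise one faces the substantially deeper problem of showing ``nef implies numerically semiample'' on a Calabi-Yau manifold. A canonical bundle formula for $\pi$, combined with a relative base-point-free argument, should then lift the conclusion from the fibers to all of $X'$, and pull back to $X$ via $\sigma$.

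The main obstacle is that the plan depends on three major open problems in dimension $\geq 4$: the classical abundance conjecture, the Calabi-Yau case of ``nef implies numerically semiample'', and the construction of an $L$-compatible MMP. The last step is the most concrete and is where I expect the real work to lie, but the hypothesis that $mL - K_X$ is only nef (rather than nef and big) precludes direct use of Nakamaye-type arguments \cite{Nak,To} that underlie the corresponding reduction in the big case of Conjecture \ref{c2}. A conditional proof assuming abundance and the Calabi-Yau case should be within reach via the canonical bundle formula; an unconditional proof in full generality seems to require, or to accompany, decisive progress on abundance itself, which is presumably why Lazi\'c-Peternell only formulate this as a conjecture.
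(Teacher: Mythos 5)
This statement is an open conjecture (the generalized abundance conjecture of Lazi\'c--Peternell), and the paper does not prove it: it is recorded as Conjecture \ref{lp} and the text explicitly notes that it is known only for $n\leq 2$, by \cite{LP}. So there is no proof in the paper to compare against, and your proposal cannot be judged as an alternative route to an established result.

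More importantly, what you have written is not a proof but a reduction of one open conjecture to several others, and you say so yourself in the final paragraph. Concretely: (i) the existence and termination of a $K_X$-MMP when $K_X$ is merely pseudoeffective (not big) is open in dimension $\geq 4$; (ii) the descent of $L$ through the MMP is genuinely problematic --- a nef $L$ with $L\cdot R>0$ on a contracted ray $R$ does not descend to a line bundle on the contraction, and your proposed fix of ``first contracting $L$-trivial extremal rays'' has no cone/contraction theorem behind it, since $L$-trivial rays need not be $K_X$-negative and hence need not be contractible at all; (iii) the classical abundance conjecture is open in dimension $\geq 4$; (iv) the Calabi--Yau case of ``nef implies numerically semiample'' is Conjecture \ref{c3}, open already for $n=3$; and (v) even granting all of this, numerical semiampleness is not a condition that can be checked fiberwise and then propagated by a canonical bundle formula --- a line bundle can restrict to a (numerically) semiample bundle on every fiber of a fibration without being numerically semiample on the total space, so the ``lift from the fibers'' step is itself a substantial gap. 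If you want to contribute something verifiable here, the realistic target is the case $n\leq 2$ treated in \cite{LP}, or a carefully stated conditional theorem in which every hypothesis you invoke (termination, abundance, the Calabi--Yau case, and a precise $L$-compatible MMP statement) is isolated and the fiber-to-total-space step is actually carried out.
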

This is known only when $n\leq 2$ by \cite{LP}.

\subsection{Nef $(1,1)$-classes on Calabi-Yau manifolds}
An important special case the generalized abundance Conjecture \ref{lp}, which was in fact one of the main motivations for it, is the case when $X$ is Calabi-Yau, i.e. $X$ is a compact K\"ahler manifold with $c_1(K_X)=0$ in $H^{1,1}(X,\mathbb{R})$. In this case, the generalized abundance conjecture specializes to the following well-known conjecture:

\begin{conjecture}\label{c3}
If $X$ is a projective Calabi-Yau manifold and $L\to X$ is a nef line bundle, then $L$ is numerically semiample.
\end{conjecture}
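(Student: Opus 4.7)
The plan is to stratify by the numerical dimension
$$\nu(L)=\max\{k\in\{0,1,\dots,n\}:L^k\cdot H^{n-k}>0\text{ for some ample }H\}.$$
The extreme cases are essentially free. When $\nu(L)=n$, the bundle $L$ is nef and big; passing to a finite \'etale cover so that $K_X$ is trivial, $L^m\otimes K_X^*$ is nef and big for $m\geq 1$, and Theorem \ref{bpf} yields that $L$ is semiample, hence numerically semiample. When $\nu(L)=0$, nefness together with $L\cdot H^{n-1}=0$ for every ample $H$ forces $L\equiv 0$ numerically, and taking $L'=\mathcal{O}_X$ works. The substantive range is $0<\nu(L)<n$.

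Here my first move would be to invoke the Beauville-Bogomolov decomposition: a finite \'etale cover of $X$ splits as $A\times M$, with $A$ an abelian variety and $M$ simply connected, and $M$ further splits as a product of simply connected strict Calabi-Yau manifolds and irreducible hyperk\"ahler manifolds. Since all the simply connected factors have vanishing $H^1$, K\"unneth forces $H^{1,1}$ of the product to split as a direct sum of pullbacks from the individual factors, so $c_1(L)$ decomposes accordingly and it suffices to handle each irreducible piece. On the abelian factor every nef line bundle is semiample by Mumford's translation and averaging arguments. On an irreducible hyperk\"ahler factor, one uses the Bogomolov-Beauville-Fujiki form $q$: when $q(c_1(L))>0$ the class is big and the first case applies, while $q(c_1(L))=0$ should force $c_1(L)$ to be pulled back from a Lagrangian fibration, a picture now largely supported by work on the Morrison-Kawamata cone conjecture and the hyperk\"ahler SYZ conjecture.

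What remains is a simply connected strict Calabi-Yau factor $Y$ with $0<\nu(L)<\dim Y$. The target is to construct a surjective holomorphic map $f:Y\to W$ onto a normal projective variety with $\dim W=\nu(L)$ and $c_1(L)=f^*[\beta]$ for an ample class $[\beta]$ on $W$, which would realize $c_1(L)$ as a semiample $(1,1)$-class in the sense of the definition preceding Question \ref{bzp}. I would attempt to produce $f$ by running a $K_Y$-trivial MMP scaled by $L$, using the recent advances in the K\"ahler MMP due to Das-Hacon and collaborators, together with a contraction theorem that extracts $W$ from the numerical data $L^{\nu(L)+1}=0$. Once $f$ exists, an affirmative answer to Question \ref{bzp} (or the direct pullback argument already recorded for $\dim W=1$ in the text) would upgrade semiampleness of $c_1(L)$ to numerical semiampleness of $L$.

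The hard part is precisely this construction of $f$. In dimension $\geq 4$ on a strict Calabi-Yau manifold, no contraction theorem is presently known that converts the numerical vanishing $L^{\nu(L)+1}=0$ into a holomorphic fibration, and even producing a single nonzero section of some $L^m$ from the numerical input is open. A complete proof will require either a substantial advance in the K\"ahler MMP for nef but non-big polarizations on Calabi-Yau manifolds, or a genuinely new Hodge-theoretic or analytic mechanism producing sections of $L^m$ directly from the numerical positivity of $L$; at present the conjecture is known only for $n\leq 3$ and in a handful of special geometries.
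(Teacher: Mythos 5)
The statement you are addressing is Conjecture \ref{c3}, which the paper records as an open problem: it is stated to be classical for $n=2$ and open already for $n=3$ (your closing claim that it is ``known only for $n\leq 3$'' is inaccurate; you may be conflating it with the Das--Hacon resolution of Conjecture \ref{c2} in dimension three, which concerns the nef and big case). So there is no proof in the paper to compare against, and your proposal is, by your own admission, a strategy rather than a proof. Within that strategy the two boundary cases are fine: for $\nu(L)=n$ the base-point-free Theorem \ref{bpf} applies directly, because $c_1(K_X)=0$ already makes $L^m\otimes K_X^*$ nef and big with no need for an \'etale cover, and for $\nu(L)=0$ nefness together with $L\cdot H^{n-1}=0$ does force $L\equiv 0$, so $L'=\mathcal{O}_X$ works.

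The genuine gaps are in everything else. First, even granting the Beauville--Bogomolov splitting after a finite \'etale cover $\pi:\ti{X}\to X$, numerical semiampleness does not obviously descend: if $\ti{L}'$ is semiample on $\ti{X}$ with $c_1(\ti{L}')=\pi^*c_1(L)$, you still need a semiample bundle on $X$ itself in the class $c_1(L)$; a norm or Galois-averaging construction changes the class by the degree of the cover and its semiampleness requires an argument, so this step is fixable but not free. Second, on the hyperk\"ahler factors the assertion that $q(c_1(L))=0$ forces $c_1(L)$ to come from a Lagrangian fibration is precisely the hyperk\"ahler SYZ conjecture, which is open in general, so you are trading one open conjecture for another. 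Third, and most seriously, on a simply connected strict Calabi--Yau factor with $0<\nu(L)<\dim Y$ you have no mechanism for producing the fibration $f$, as you acknowledge; and even if $f$ were produced with $c_1(L)=f^*[\beta]$, upgrading semiampleness of the class $c_1(L)$ to numerical semiampleness of the bundle $L$ is exactly Question \ref{bzp}, which the paper also leaves open (the affirmative cases recorded there are $\dim Y\leq 1$, and $\dim Y=\dim X$ only modulo an expected projectivity statement). In short: this is a reasonable roadmap consistent with how the problem is usually attacked, but it is a reduction of one open conjecture to several others, not a proof, and the paper does not claim one either.
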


This conjecture is a classical result when $n=2$, but it remains open already for $n=3$, see e.g. \cite{LOP} for recent results and an overview.

Going now from line bundles to general $(1,1)$-classes, recall that Conjecture \ref{c2} predicts that every nef and big $(1,1)$-class on a Calabi-Yau manifold is semiample.  One is then naturally led to wonder about the case of nef $(1,1)$-classes that are not big. Taking cue from Conjecture \ref{c3}, one might expect that every nef $(1,1)$-class on a Calabi-Yau manifold must be semiample. This however fails completely:

\begin{theorem}[Filip-T. \cite{FT,FT2}]
There are $K3$ surfaces (both projective and non-projective) with a nef $(1,1)$-class which is not semipositive, hence not semiample.
\end{theorem}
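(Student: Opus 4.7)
The plan is to produce the required nef non-semipositive class dynamically, on a K3 surface $X$ admitting a hyperbolic holomorphic automorphism $f:X\to X$ (that is, the induced isometry $f^*$ on $H^{1,1}(X,\mathbb{R})$ has an eigenvalue $\lambda>1$). Such examples exist both in the projective regime---for instance on a very general Wehler $(2,2,2)$-surface $X\subset\mathbb{P}^1\times\mathbb{P}^1\times\mathbb{P}^1$, where generic compositions of the three natural involutions are hyperbolic---and in the non-projective regime via McMullen's Picard-rank-zero K3 surfaces admitting automorphisms of positive topological entropy.

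Step one is to identify the candidate class. Applying Perron--Frobenius to $f^*$ on the closure of the positive cone yields a class $[\alpha]\in H^{1,1}(X,\mathbb{R})$ with $f^*[\alpha]=\lambda[\alpha]$, and since $f^*$ is an isometry, the identity $[\alpha]^2=(f^*[\alpha])^2=\lambda^2[\alpha]^2$ forces $[\alpha]^2=0$, placing $[\alpha]$ on the boundary of the positive cone. Nefness of $[\alpha]$ then follows because for any K\"ahler class $[\omega_0]$ the classes $\lambda^{-n}(f^n)^*[\omega_0]$ are K\"ahler (the K\"ahler cone being preserved by an automorphism) and converge to a positive multiple of $[\alpha]$, so $[\alpha]\in\overline{\mathcal{C}}$.

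Step two is the contradiction for non-semipositivity. Assume $\omega$ is a smooth semipositive representative of $[\alpha]$. Since $\int_X\omega^2=[\alpha]^2=0$ and $\omega^2\geq 0$ pointwise, $\omega\wedge\omega\equiv 0$, so $\omega$ has rank at most one at every point. Now invoke the uniqueness theorem of Cantat for closed positive currents in the eigenclass of a hyperbolic K3 automorphism: the only such current in $[\alpha]$ is the Green current $\eta:=\lim_n\lambda^{-n}(f^n)^*\omega_0$. Hence $\omega=\eta$, and in particular $f^*\omega=\lambda\omega$ on the nose (not merely modulo $\ddbar$). Consequently, at every point where $\omega\neq 0$, the complex rank-one kernel distribution $\ker\omega\subset TX$ is preserved by $df$, and being of complex rank one is automatically integrable, defining an $f$-invariant holomorphic foliation $\mathcal{F}$.

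Step three is to exclude such an invariant foliation. In the non-projective Picard-rank-zero case this is essentially immediate: the line bundle $T_{\mathcal{F}}$ is trivial, so $\mathcal{F}$ is generated by a nowhere-vanishing holomorphic vector field, contradicting $H^0(X,TX)=0$ on any K3 (alternatively, any compact leaf would produce a curve, which cannot exist when $\mathrm{Pic}(X)=0$). In the projective Wehler case, one observes that $c_1(T_{\mathcal{F}})$ must be $f^*$-fixed in $\mathrm{NS}(X)$, hence orthogonal to both eigenclasses $[\alpha_\pm]$, forcing it into the negative-definite part of the N\'eron--Severi lattice; this is then combined with the dynamical fact (Cantat) that the measure of maximal entropy $\mu_f=\eta_+\wedge\eta_-$ is singular with respect to Lebesgue, which the existence of a smooth rank-one $\omega$ with $\omega\wedge\eta_-$ controlling $\mu_f$ would contradict. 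I expect the projective case to be the genuine obstacle: ruling out the $f$-invariant foliation requires true ergodic-theoretic input beyond the cohomological setup, and it is at this step that the Filip--Tosatti argument does its real work; the rest is a packaging of Perron--Frobenius on the positive cone together with the uniqueness of the Green current.
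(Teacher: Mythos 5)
Your Steps 1 and 2 reproduce the paper's setup correctly: the eigenclasses $[\alpha_\pm]$ of a hyperbolic automorphism are nef with $[\alpha_\pm]^2=0$, and by Cantat's uniqueness theorem a smooth semipositive representative $\omega$ would have to coincide with the canonical eigencurrent $\eta_\pm$, hence satisfy $f^*\omega=\lambda\omega$ on the nose. The gap is Step 3, which is where the entire content of the theorem lives. The paper's route to non-smoothness is the Kummer rigidity theorem: Cantat--Dupont \cite{CD} in the projective case and Filip--Tosatti \cite{FT2} in general show that smoothness of $\eta_\pm$ forces $(X,T)$ to be a Kummer example, and one then simply chooses non-Kummer pairs $(X,T)$. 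Your projective argument quietly assumes this: the ``dynamical fact that $\mu_f=\eta_+\wedge\eta_-$ is singular with respect to Lebesgue'' for non-Kummer automorphisms is not in Cantat's 2001 paper --- it \emph{is} the Cantat--Dupont rigidity theorem stated in contrapositive form, so invoking it makes the argument circular rather than an independent proof.

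The non-projective case is not ``essentially immediate'' and does not close as written. First, $\omega\wedge\omega\equiv 0$ only gives pointwise rank at most one; $\omega$ may vanish on a nonempty set, so $\ker\omega$ need not be a globally defined line subbundle of $TX$, and there is then no line bundle $T_{\mathcal{F}}$ to which you can apply $\mathrm{Pic}(X)=0$. Second, even on the open set where the rank is locally constant and equal to one, the kernel distribution of a closed semipositive $(1,1)$-form is integrable with holomorphic leaves (Bedford--Kalka Monge--Amp\`ere foliation theory), but it is not automatically a \emph{holomorphic} foliation: the transverse holomorphy needed to regard $T_{\mathcal{F}}$ as a holomorphic subsheaf of $TX$ requires justification you do not give. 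If the non-projective case really reduced to $H^0(X,TX)=0$ on a $K3$, the paper \cite{FT2} --- whose proof proceeds through a delicate analysis of degenerating families of Ricci-flat K\"ahler metrics --- would be superfluous. In short, your reduction to ``the eigencurrent cannot be smooth'' is exactly the paper's reduction, but the proof of that non-smoothness is the hard dynamical and geometric input, and your proposal does not supply it.
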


These examples come from holomorphic dynamics, where certain $K3$ surfaces $X$ admit a holomorphic automorphism $T:X\to X$ whose iterates exhibit a chaotic behavior. In this case, Cantat \cite{Can} has constructed two nontrivial nef classes $[\alpha_{\pm}]$, which are not big, which are respectively expanded and contracted by the dynamics, and which contain only one closed positive current $\eta_{\pm}$ with H\"older continuous potentials. A dynamical rigidity theorem, proved by Cantat-Dupont \cite{CD} when $X$ is projective and by Filip and the author \cite{FT2} in general, shows that if $\eta_{\pm}$ is smooth (equivalently, if $[\alpha_{\pm}]$ is semipositive) then $X$ must be a Kummer $K3$ surface (and $T$ must come from an affine transformation of the corresponding torus). There are however plenty of examples of such $(X,T)$ which are not Kummer (including non-projective ones constructed by McMullen \cite{McM}), so on these examples the nef classes $[\alpha_{\pm}]$ are not semipositive.

\section{Currents in nef $(1,1)$-classes}

\subsection{Currents with bounded potentials}
Despite the failure of the direct generalization of Conjecture \ref{c3} to $(1,1)$-classes, the author conjectured in \cite[Conjecture 3.7]{To5} that the following weaker statement should be true:

\begin{conjecture}\label{c4}
If $X$ is a Calabi-Yau manifold and $[\alpha]$ is a nef $(1,1)$-class, then $[\alpha]$ contains a closed positive current with bounded potentials.
\end{conjecture}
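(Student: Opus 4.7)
The plan is to construct the current as a weak limit of Ricci-flat K\"ahler metrics sitting in approximating K\"ahler classes. Fix a K\"ahler form $\omega$ on $X$ and a Ricci-flat volume form $\Omega$ on $X$ (which exists by Yau's theorem, since $c_1(K_X)=0$ in $H^{1,1}(X,\mathbb{R})$). For each $\ve\in(0,1]$ the class $[\alpha]+\ve[\omega]$ is K\"ahler, so by \eqref{succa} I may pick a smooth $\psi_\ve$ so that $\alpha_\ve:=\alpha+\ve\omega+\ddbar\psi_\ve$ is a K\"ahler form representing it, and then invoke Yau's theorem to find a smooth $\vp_\ve$, normalized by $\sup_X\vp_\ve=0$, solving
\be\label{plan-ma}
(\alpha_\ve+\ddbar\vp_\ve)^n=c_\ve\,\Omega,\qquad c_\ve:=\frac{\int_X(\alpha+\ve\omega)^n}{\int_X\Omega}.
\ee
The forms $\omega_\ve=\alpha_\ve+\ddbar\vp_\ve$ are Ricci-flat K\"ahler metrics in $[\alpha]+\ve[\omega]$. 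If I can prove $\|\vp_\ve\|_{L^\infty(X)}\le C$ uniformly in $\ve$, then a subsequence of the $\alpha$-plurisubharmonic functions $\psi_\ve+\vp_\ve$ converges in $L^1(X)$ to a bounded $\alpha$-plurisubharmonic function $\vp$, and $\alpha+\ddbar\vp\ge 0$ is the desired closed positive current in $[\alpha]$ with bounded potential.

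When $[\alpha]$ is nef and big, $c_\ve$ stays bounded below by a positive constant and the Boucksom-Eyssidieux-Guedj-Zeriahi extension of Ko{\l}odziej's estimate delivers the required uniform $L^\infty$ bound on $\vp_\ve$; the limit $\vp$ is then the unique (up to constants) bounded solution of the degenerate non-pluripolar Monge-Amp\`ere equation $\langle(\alpha+\ddbar\vp)^n\rangle=c_0\Omega$. So Conjecture \ref{c4} is already known in the nef and big case, and the problem reduces to the non-big case.

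The main obstacle is thus $[\alpha]^n=0$: here $c_\ve\to 0$ and the right-hand side of \eqref{plan-ma} collapses, so Ko{\l}odziej--EGZ arguments no longer give a uniform bound, and a priori $\vp_\ve$ could tend to $-\infty$. Two routes seem open. First, if the transcendental base-point-free Conjecture \ref{c2} could be invoked for $[\alpha]$, it would produce a surjective holomorphic map $f:X\to Y$ with connected fibers onto a normal compact K\"ahler analytic space and a K\"ahler class $[\beta]$ on $Y$ with $[\alpha]=f^*[\beta]$; pulling back any K\"ahler representative of $[\beta]$ then yields a closed positive current in $[\alpha]$ with bounded (in fact continuous) potential. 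This, of course, derives a weaker statement from a much harder one. Second, one can try to adapt the recent uniform $L^\infty$ estimates of Guo-Phong-Tong and collaborators for degenerate complex Monge-Amp\`ere equations, which reduce matters to controlling the integrability of $\Omega$ against the normalized Ricci-flat measures $\omega_\ve^n/\int_X\omega_\ve^n$ as $\ve\to 0$. Obtaining such control when $[\alpha]$ has numerical dimension strictly between $0$ and $n$ is where I expect the real work to lie: it seems to require either a structural input amounting to a weak form of Conjecture \ref{c2}, or a genuinely new technique tied to the Calabi-Yau geometry of $X$, likely through further progress in the transcendental minimal model program or in the analysis of collapsing Ricci-flat metrics.
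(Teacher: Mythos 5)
The statement you were asked to prove is Conjecture \ref{c4}, and the paper does not prove it: it is stated explicitly as an open problem, with the remark that it is open even for $n=2$ (and that even the weaker assertion about vanishing Lelong numbers is open when $[\alpha]$ is not big). So there is no proof in the paper to compare yours against, and your proposal, as you yourself acknowledge, is not a proof either: it is a reduction plus a frank admission that the essential case remains unresolved. The genuine gap is exactly where you locate it, namely the case $\int_X\alpha^n=0$, where $c_\ve\to 0$ and no known $L^\infty$ estimate for the potentials $\vp_\ve$ survives the degeneration. Neither of your two proposed routes closes it: invoking Conjecture \ref{c2} would derive the statement from a strictly harder open conjecture (and in the non-big case Conjecture \ref{c2} does not even apply as stated, since it concerns classes with $\lambda[\alpha]-c_1(K_X)$ nef \emph{and big}; the relevant statement for non-big classes is rather Conjecture \ref{c3} for $(1,1)$-classes, which is \emph{false} by the Filip--Tosatti examples on $K3$ surfaces, which is precisely why Conjecture \ref{c4} asks only for bounded rather than smooth potentials); and the Guo--Phong--Tong-type estimates, as you note, require an integrability input that is not available.

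That said, your reduction is essentially the paper's own reformulation. The Theorem in Section 4.2 proves that Conjecture \ref{c4} is \emph{equivalent} to the uniform estimate \eqref{esti} on the normalized potentials of the approximating Ricci-flat metrics, using the envelopes $u_\ve$ together with the stability estimate $\sup_X|\vp_\ve-u_\ve|\leq C$ of \cite{BEGZ} in the big case and \cite{FGS,GPTW} in the non-big case. One point worth noting: this last estimate holds even when $[\alpha]$ is not big, so the obstruction is not the Monge--Amp\`ere analysis per se but the boundedness of the limiting envelope $u_0$, which is exactly the existence of a bounded current in $[\alpha]$ --- i.e.\ the reduction is circular in the non-big case, which is a cleaner way of seeing why your second route stalls. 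Two small technical remarks on your setup: you should normalize the full potential $\psi_\ve+\vp_\ve$ (not just $\vp_\ve$) when extracting the $L^1$ limit, since the auxiliary functions $\psi_\ve$ are not a priori uniformly bounded; and in the nef and big case the conclusion you cite from \cite{BEGZ} is correct, so the only open case is indeed $[\alpha]^n=0$. In summary: your assessment of the problem is accurate and consistent with the paper, but the statement remains unproven, by you and by the paper alike.
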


Here a closed positive current in the class $[\alpha]$ is a current of the form $\alpha+\ddbar\vp$ which is semipositive in the weak sense, where $\vp$ is a quasi-psh function on $X$ (i.e. an usc $L^1$ function $\vp:X\to \mathbb{R}\cup\{-\infty\}$ which in local charts is the sum of a smooth function and a plurisubharmonic function). If $\vp$ is a bounded function then we say that the current has bounded potentials.

Remarkably, Conjecture \ref{c4} is open even when $n=2$. Even the weaker statement that $[\alpha]$ contains a closed positive current with vanishing Lelong numbers is open (when $[\alpha]$ is not big). In the opposite direction, one could also strengthen the conjecture by requiring that the current have continuous (and not just bounded) potentials. Inspired by the generalized abundance Conjecture \ref{lp}, one can also pose the following more general conjecture, which can  be compared with Conjecture \ref{c2}:

\begin{conjecture}\label{c5}
Let $X$ be a compact K\"ahler manifold with $K_X$ pseudoeffective, and $[\alpha]$ a nef $(1,1)$-class such that $\lambda[\alpha]-c_1(K_X)$ is nef for some $\lambda\in\mathbb{R}_{>0}$. Then $[\alpha]$ contains a closed positive current with bounded potentials.
\end{conjecture}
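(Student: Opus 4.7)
The plan is to produce the desired closed positive current via a vanishing viscosity approximation, solving a family of twisted complex Monge-Amp\`ere equations on K\"ahler perturbations of $[\alpha]$ and then extracting a bounded-potential limit. Fix a K\"ahler form $\omega$ on $X$ and a smooth closed real $(1,1)$-form $\alpha$ representing $[\alpha]$. Using the nef-ness of $\lambda[\alpha]-c_1(K_X)$ together with the $\ddbar$-lemma, for any fixed small $\delta>0$ we may choose a smooth volume form $\Omega$ on $X$ such that $\lambda\alpha+\Ric(\Omega)\geq -\delta\omega$ on $X$. Since $[\alpha]$ is nef, the class $[\alpha]+\varepsilon[\omega]$ is K\"ahler for every $\varepsilon>0$, and by Aubin-Yau there is a unique smooth function $\vp_\varepsilon:X\to\mathbb{R}$ with $\tilde\omega_\varepsilon:=\alpha+\varepsilon\omega+\ddbar\vp_\varepsilon>0$ solving
\begin{equation*}
\tilde\omega_\varepsilon^n=e^{\lambda\vp_\varepsilon}\,\Omega.
\end{equation*}
The associated Ricci form satisfies $\Ric(\tilde\omega_\varepsilon)+\lambda\tilde\omega_\varepsilon=\lambda(\alpha+\varepsilon\omega)+\Ric(\Omega)\geq(\lambda\varepsilon-\delta)\omega$, giving a uniform lower Ricci-type bound as soon as $\delta<\lambda\varepsilon$.

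The upper bound on $\vp_\varepsilon$ is immediate from the maximum principle: at a maximum point $p$ of $\vp_\varepsilon$ one has $\ddbar\vp_\varepsilon(p)\leq 0$, so $0\leq\tilde\omega_\varepsilon(p)\leq(\alpha+\varepsilon\omega)(p)$ as Hermitian forms, and hence $e^{\lambda\vp_\varepsilon(p)}\Omega(p)\leq(\alpha+\varepsilon\omega)^n(p)$, which is uniformly bounded above. Granted a uniform lower bound $\vp_\varepsilon\geq -C$ (the crux, discussed below), standard compactness for uniformly bounded quasi-psh functions yields an $L^1(X,\omega^n)$ limit $\vp_{\varepsilon_k}\to\vp$ along a subsequence, where $\vp$ is bounded and $\alpha$-psh on $X$; the current $\alpha+\ddbar\vp$ is then a closed positive current in $[\alpha]$ with bounded potentials, as required.

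The main obstacle is the uniform lower bound on $\vp_\varepsilon$. When $[\alpha]$ is big one has $\int_X\alpha^n>0$, so the Ko\l{}odziej-Eyssidieux-Guedj-Zeriahi pluripotential theory applied to $\tilde\omega_\varepsilon$ in $[\alpha+\varepsilon\omega]$ supplies the required uniform $L^\infty$ bound (and in this big regime one can in fact appeal to the partial results toward Conjecture \ref{c2}). The genuine difficulty arises when $[\alpha]$ is not big: the volumes $\int_X(\alpha+\varepsilon\omega)^n$ may degenerate as $\varepsilon\to 0$, and the naive minimum-point argument fails because $\alpha$ itself need not be semipositive. My strategy would be to adapt the recent auxiliary-Monge-Amp\`ere $L^\infty$ technique of Guo-Phong-Tong to this degenerate setting, crucially exploiting the uniform Ricci-type lower bound on $\tilde\omega_\varepsilon$ above together with a singular positive-curvature metric on $K_X$ supplied by its pseudoeffectiveness, in order to control an entropy or capacity functional uniformly in $\varepsilon$. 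The case $c_1(K_X)=0$ already specializes to Conjecture \ref{c4}, which is open even on K3 surfaces, so any successful attack will almost certainly require essentially new pluripotential or flow-theoretic input beyond what is currently available.
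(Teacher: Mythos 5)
The statement you are addressing is Conjecture \ref{c5}, which the paper poses as an open problem and does not prove; its special case $c_1(K_X)=0$ is Conjecture \ref{c4}, which is open even for $K3$ surfaces. Your proposal, as you yourself acknowledge, does not close the essential gap: everything up to and including the extraction of a bounded limit is conditional on a uniform lower bound $\vp_\ve\geq -C$, and that bound \emph{is} the conjecture. Indeed, the paper proves (in the subsection on families of Monge-Amp\`ere equations) that the existence of a bounded-potential current in $[\alpha]$ is \emph{equivalent} to the uniform estimate \eqref{esti}, using precisely the results of \cite{BEGZ}, \cite{FGS} and \cite{GPTW} that you propose to adapt: these give $\sup_X|\vp_\ve-u_\ve|\leq C$, where $u_\ve$ is the envelope of the class $[\alpha]+\ve[\omega]$, i.e.\ they bound the solution \emph{relative to} the envelope, never absolutely. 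The auxiliary Monge-Amp\`ere method of Guo--Phong--Tong(--Wang) is of the same nature, so adapting it can at best reproduce this reduction; the boundedness of the limiting envelope $u_0$ is untouched, and neither the Ricci-type lower bound nor any entropy control supplied by your twisted equation $\ti{\omega}_\ve^n=e^{\lambda\vp_\ve}\Omega$ addresses it, since in the non-big case the volumes $\int_X(\alpha+\ve\omega)^n$ tend to $\int_X\alpha^n=0$ and the known capacity and entropy arguments degenerate with them.

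Two smaller inaccuracies. First, your claim that the big case follows from ``Ko\l odziej--Eyssidieux--Guedj--Zeriahi pluripotential theory'' is not justified as stated: Ko\l odziej's uniform estimate requires a semipositive (not merely nef) and big reference form, and for a general nef and big class \cite{BEGZ} again only yields boundedness relative to the envelope with minimal singularities, whose boundedness is not automatic --- the examples of Kim and of Koike on Grauert's ruled surface show that nef and big classes need not be semipositive. What is actually known in the big case comes from the partial results toward Conjecture \ref{c2} in low dimensions, as you mention only parenthetically. Second, the hypotheses that $K_X$ is pseudoeffective and that $\lambda[\alpha]-c_1(K_X)$ is nef enter your argument only to set up the twisted equation and are never genuinely exploited, whereas any proof must use them in an essential way: the Filip--Tosatti $K3$ examples already show that the limiting current can fail to be smooth, so the boundedness must come from somewhere beyond soft compactness. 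In short, your strategy is the sensible and standard reduction, coinciding with the one the paper itself records, but it is not a proof; the key analytic input is missing, and the paper explicitly leaves it as a conjecture.
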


Let us now discuss in more detail Conjecture \ref{c4} when $n=2$, so $X$ is a Calabi-Yau surface. By the Kodaira classification $X$ is finitely covered by a torus or a $K3$ surface. Since Conjecture \ref{c4} is easily seen to hold on tori, and to behave well under finite covers, we can restrict to the case when $X$ is $K3$, and consider an arbitrary nef $(1,1)$-class $[\alpha]$.  As mentioned earlier, if $[\alpha]$ is also big then $[\alpha]$ is semiample \cite{FT} and so Conjecture \ref{c4} holds in this case. Similarly, if $[\alpha]$ is not big and $\mathbb{R}_{>0}[\alpha]\cap H^2(X,\mathbb{Q})\neq \emptyset$, then $[\alpha]$ is again semiample \cite{FT}.

We are thus left with the case when $[\alpha]$ is nef, not big and ``strongly irrational'' in the sense that $\mathbb{R}_{>0}[\alpha]\cap H^2(X,\mathbb{Q})=\emptyset$. In this case it is expected that $[\alpha]$ contains a unique closed positive current, and this is indeed proved in many cases by Sibony-Soldatenkov-Verbitsky \cite{SSV}, see also \cite[Theorem 4.3.1]{FT3}.
For such strongly irrational classes, the current state of Conjecture \ref{c5} is given by:
\begin{theorem}[Filip-T. \cite{FT3}]
If $X$ is a projective $K3$ surface with no $(-2)$-curves and Picard rank at least $3$, and $[\alpha]$ a nef $(1,1)$-class which is not big, strongly irrational, and $[\alpha]\in N^1(X)\otimes\mathbb{R}\subset H^{1,1}(X,\mathbb{R})$, then $[\alpha]$ contains a unique closed positive current, and this current has continuous potentials.
\end{theorem}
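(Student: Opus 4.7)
My plan would be to combine the uniqueness of closed positive currents for strongly irrational nef classes, from \cite{SSV} and \cite[Theorem 4.3.1]{FT3}, with an approximation of $[\alpha]$ by the smooth semipositive forms coming from elliptic fibrations on $X$. Granting the uniqueness statement from the cited work, the real task is to produce a representative of $[\alpha]$ with continuous potential.

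The geometric setup is the following. Since $X$ has no $(-2)$-curves, the nef cone of $X$ equals the closure of one component of the positive cone of the intersection form on $N^1(X)\otimes\mathbb{R}$, and since $[\alpha]$ is nef but not big, $[\alpha]^2=0$, so $[\alpha]$ lies on the null quadric bounding the nef cone. With Picard rank at least $3$, this null quadric is a hypersurface of dimension at least $2$ on which rational classes are dense. Combined with the absence of $(-2)$-curves, a Riemann-Roch argument shows that every primitive rational class $[L]$ on the null quadric has base-point-free linear system $|L|$ defining an elliptic fibration $X\to\mathbb{P}^1$. So $[\alpha]$ is a limit of rational classes $[\alpha_k]\in N^1(X)\otimes\mathbb{R}$ each of which admits a smooth semipositive representative $\eta_k$ obtained as a positive rational multiple of $\pi_k^*\omega_{\mathrm{FS}}$ for the associated fibration $\pi_k$.

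I would then write $\eta_k=\alpha+\beta_k+\ddbar\vp_k$ with $\beta_k$ a smooth form in $[\alpha_k]-[\alpha]$ of norm $\varepsilon_k\to 0$, and $\vp_k$ smooth with $\sup_X\vp_k=0$. The crux is a uniform $L^\infty$ bound $\|\vp_k\|_{L^\infty}\leq C$ independent of $k$. Given such a bound, compactness of quasi-psh functions yields a subsequential limit $\vp_\infty$ with $\alpha+\ddbar\vp_\infty\geq 0$, a closed positive current in $[\alpha]$ with bounded potential, which the cited uniqueness identifies with the unique such current. To upgrade boundedness to continuity I would apply a Dinew-Ko\l{}odziej-type stability estimate for the complex Monge-Amp\`ere equation along the approximation, exploiting that each $\vp_k$ is itself smooth on $X$.

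The main obstacle, and where I would invest the most effort, is the uniform $L^\infty$ bound on $\vp_k$. Because $\eta_k^2=0$ identically, the standard Ko\l{}odziej $L^\infty$ estimate is not directly available: it requires an $L^p$ bound on a positive mass density, while here the Monge-Amp\`ere masses of the approximants are zero. The hypotheses Picard rank at least $3$ and absence of $(-2)$-curves should enter precisely at this point: they ensure a positive-dimensional family of elliptic fibrations whose null classes sweep out a neighborhood of $[\alpha]$ on the null quadric, giving enough flexibility to bound $\vp_k$ by a comparison with a second, transversally chosen, approximating sequence, in the spirit of an Aubin-Mabuchi energy pairing. An alternative would be to perturb to big classes $[\alpha_k]+\delta[\omega]$, solve Yau's equation there, and extract a Ko\l{}odziej-type estimate that is uniform as $\delta\to 0$, using the signature $(1,\rho-1)$ of the intersection form to convert the vanishing $[\alpha]^2=0$ into the required coercivity. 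Realizing either strategy is, I expect, the technical heart of the proof.
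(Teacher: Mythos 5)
There is a genuine gap here, and it sits exactly where you place your bets. The uniform $L^\infty$ bound on the potentials $\vp_k$ that you identify as ``the technical heart'' is, for a nef non-big class on a $K3$ surface, precisely the content of Conjecture \ref{c4} in the class $[\alpha]$, which this survey states is open in general even for $n=2$; neither of your two sketched strategies closes it. An Aubin--Mabuchi-type energy pairing controls energies, not sup-norms, and the perturbation to big classes $[\alpha_k]+\delta[\omega]$ runs headlong into the degeneration you already diagnosed: the Monge--Amp\`ere mass tends to $0$ as $\delta\to 0$, so Ko{\l}odziej's estimate gives nothing uniform. The actual proof in \cite{FT3} takes a completely different route, which the survey flags explicitly: it relies crucially on the Kawamata--Morrison cone conjecture, known for $K3$ surfaces by Sterk \cite{St}. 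The point is that $\mathrm{Aut}(X)$ acts on the nef cone with a rational polyhedral fundamental domain, so an arbitrary irrational boundary class is written as a limit of pullbacks $\gamma_n^*[\beta_n]$ by automorphisms of classes $[\beta_n]$ ranging over a \emph{fixed compact} piece of the fundamental domain; the canonical currents with continuous potentials are constructed there with uniform estimates and then transported by the $\gamma_n$. Your proposal never invokes automorphisms, and without them there is no known mechanism producing the uniform bound.

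There is also a concrete failure at the very first step. You assert that rational isotropic classes are dense on the null quadric once $\rho\geq 3$, so that $[\alpha]$ can be approximated by elliptic fibration classes. But an even lattice of signature $(1,2)$ need not represent zero: for instance $\langle 2\rangle\oplus\langle -6\rangle\oplus\langle -6\rangle$ has no nonzero isotropic vectors (and no vectors of square $-2$), and by surjectivity of the period map it occurs as the Picard lattice of a projective $K3$ covered by the theorem's hypotheses. Such a surface carries no elliptic fibration at all, so your approximating sequence $\eta_k$ does not exist. Finally, the proposed upgrade from bounded to continuous potentials via a Dinew--Ko{\l}odziej stability estimate is not available either: those estimates require $L^p$ control of the Monge--Amp\`ere densities, whereas here the relevant Monge--Amp\`ere measures vanish identically since $[\alpha]^2=0$. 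The uniqueness input from \cite{SSV} and \cite[Theorem 4.3.1]{FT3} is correctly identified, but everything else needs to be rebuilt around the cone conjecture.
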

This result uses crucially the ``Kawamata-Morrison Cone Conjecture'' on projective Calabi-Yau manifolds, which is known for $K3$ surfaces \cite{St} but open in dimensions $3$ and higher. New ideas will be needed in order to settle Conjecture \ref{c4} completely when $n=2$.

\subsection{Families of Monge-Amp\`ere equations}
There is a direct link between Conjectures \ref{c2} and \ref{c4} and regularity of solutions of certain complex Monge-Amp\`ere equations. More precisely, let $X^n$ be a compact K\"ahler manifold with a nef $(1,1)$-class $[\alpha]$. For every $\ve>0$ the class $[\alpha]+\ve[\omega]$ is K\"ahler, so by Yau's Theorem \cite{Y} we can solve the family of complex Monge-Amp\`ere equations
$$\omega_\ve:=\alpha+\ve\omega+\ddbar\vp_\ve>0, \quad \omega_\ve^n=c_\ve e^F\omega^n,$$
where $F\in C^\infty(X,\mathbb{R})$ is given, $c_\ve\in\mathbb{R}_{>0}$ is given by
$$c_\ve=\frac{\int_X(\alpha+\ve\omega)^n}{\int_X e^F\omega^n},$$
and $\vp_\ve$ is normalized by $\sup_X\vp_\ve=0$. For example, if $X$ is Calabi-Yau, then with a suitable choice of $F$ the metrics $\omega_\ve$ are Ricci-flat K\"ahler, see the survey \cite{To5} for a detailed discussion of this case.

Going back to our general setting, we have:
\begin{theorem}
In this setting, the nef class $[\alpha]$ contains a closed positive current with bounded potentials if and only if there is $C>0$ such that for all $\ve>0$ small we have
\begin{equation}\label{esti}
\sup_X|\vp_\ve|\leq C.
\end{equation}
\end{theorem}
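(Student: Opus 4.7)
The plan is to prove the two implications separately. For the ``if'' direction, assuming $\sup_X|\vp_\ve|\le C$ uniformly, I would observe that for $\ve\le 1$ each $\vp_\ve$ is $(\alpha+\omega)$-plurisubharmonic (by monotonicity of the class), so the family is uniformly $L^\infty$-bounded and quasi-psh in a fixed K\"ahler class. Standard weak compactness then yields a subsequence $\vp_{\ve_j}\to\vp$ in $L^1(X)$, and passing to the distributional limit in $\alpha+\ve_j\omega+\ddbar\vp_{\ve_j}\ge 0$, the upper semicontinuous regularization $\vp^*$ is $\alpha$-plurisubharmonic with $|\vp^*|\le C$. Thus $T:=\alpha+\ddbar\vp^*$ is a closed positive current in $[\alpha]$ with bounded potential.

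For the ``only if'' direction, let $\psi\in L^\infty(X)$ with $\alpha+\ddbar\psi\ge 0$, normalized so $\sup_X\psi=0$. Since $\sup_X\vp_\ve=0$ automatically gives $\vp_\ve\le 0$, only a uniform lower bound $\vp_\ve\ge -C$ is needed. My plan is to shift the reference using $\psi$: setting $\theta_\ve:=\alpha+\ve\omega+\ddbar\psi$ and $w_\ve:=\vp_\ve-\psi$, the Monge-Amp\`ere equation becomes
\begin{equation*}
(\theta_\ve+\ddbar w_\ve)^n=c_\ve e^F\omega^n,
\end{equation*}
where $\theta_\ve\ge\ve\omega$ is a closed positive current with bounded potentials, and the right-hand side has uniformly bounded $L^\infty$-density with respect to $\omega^n$ (since $c_\ve$ is uniformly bounded and $F$ is smooth). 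A Kolodziej-type $L^\infty$ estimate in this degenerate pluripotential setting (following Eyssidieux-Guedj-Zeriahi, Boucksom-Eyssidieux-Guedj-Zeriahi, and more recent work of Guo-Phong-Tong) then yields a uniform bound $\|w_\ve\|_\infty\le C$, and the boundedness of $\psi$ transfers this to the desired $\|\vp_\ve\|_\infty\le C'$.

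The main obstacle is the non-big case $\int_X\alpha^n=0$: here $\int_X\theta_\ve^n=\int_X(\alpha+\ve\omega)^n\to 0$, so the standard Kolodziej $L^\infty$ estimate (which requires a uniform positive lower bound on the total Monge-Amp\`ere mass) does not apply directly. The crucial role of the bounded potential $\psi$ is precisely to compensate for this degeneration by providing uniform capacity control on sublevel sets of $w_\ve$, allowing Kolodziej's comparison-and-iteration argument to be run with constants independent of $\ve$. Making this rigorous is the technically delicate step.
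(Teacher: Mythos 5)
Your ``if'' direction is essentially identical to the paper's: uniform $L^\infty$ bound plus weak compactness of quasi-psh functions in a fixed class gives an $L^1$-limit $\vp_0$ with $\alpha+\ddbar\vp_0\geq 0$ and $\vp_0$ bounded. That part is fine.

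The ``only if'' direction has a genuine gap. You reduce everything to the claim that a Kolodziej-type iteration for $(\theta_\ve+\ddbar w_\ve)^n=c_\ve e^F\omega^n$ can be run with constants independent of $\ve$, with the bounded potential $\psi$ supposedly supplying the missing capacity control when $\int_X(\alpha+\ve\omega)^n\to 0$. You correctly identify that this is where the difficulty lies, but you do not give an argument for it, and the mechanism you propose is not obviously workable: in Kolodziej's scheme the constants degenerate through the total Monge--Amp\`ere mass of the reference class, and the mere existence of a bounded $\theta_\ve$-psh function does not by itself restore a uniform volume-capacity inequality. Asserting that ``making this rigorous is the technically delicate step'' leaves precisely the heart of the theorem unproved. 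The paper circumvents this differently: it introduces the envelopes $u_\ve=\sup\{\eta\leq 0 : \alpha+\ve\omega+\ddbar\eta\geq 0\}$, which decrease to $u_0\geq\vp_0\geq -C$ (this is where the bounded potential enters, via elementary monotonicity rather than via capacity), and then invokes the theorem of Boucksom--Eyssidieux--Guedj--Zeriahi in the big case and of Fu--Guo--Song (reproved by Guo--Phong--Tong--Wang) in the non-big case, which gives $\sup_X|\vp_\ve-u_\ve|\leq C$ for \emph{any} nef class, with no hypothesis on the existence of a bounded potential. That decoupling --- (i) bound the envelope using $\vp_0$, (ii) compare $\vp_\ve$ to the envelope using a known uniform estimate --- is what makes the proof close. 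If you want to salvage your route, you should either cite the Fu--Guo--Song/GPTW estimate explicitly as the input replacing your ``delicate step,'' or explain how $\psi$ enters the comparison principle in a way that keeps the constants uniform as the mass degenerates; as written, neither is done.
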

\begin{proof}
First assume that \eqref{esti} holds. Then from weak compactness of closed positive currents with bounded cohomology classes we see that there is a sequence $\ve_i\to 0$ such that $\vp_{\ve_i}$ converge in $L^1(X)$ to a quasi-psh function $\vp_0$ which satisfies $\alpha+\ddbar\vp_0\geq 0$ weakly, i.e. $\alpha+\ddbar\vp_0$ is a closed positive current in the class $[\alpha]$. From the uniform estimate \eqref{esti} and standard properties of quasi-psh functions, it follows that $\vp_0$ is bounded, as desired.

Now assume conversely that there is a bounded quasi-psh function $\vp_0$ with $\alpha+\ddbar\vp_0\geq 0$, which we can again normalize by $\sup_X\vp_0=0$. Let
$$u_0(x)=\sup\{\eta(x)\ |\ \alpha+\ddbar\eta\geq 0, \eta\leq 0\},$$
be the standard envelope, then we have $\alpha+\ddbar u_0\geq 0$ weakly, and $\vp_0\leq u_0\leq 0$, so $u_0$ is bounded as well. Given $\ve>0$ we consider similarly the envelope
$$u_\ve(x)=\sup\{\eta(x)\ |\ \alpha+\ve\omega+\ddbar\eta\geq 0, \eta\leq 0\},$$
which are easily seen to decrease pointwise to $u_0$ as $\ve\to 0$. Since $u_0$ is bounded, we thus see that there is $C>0$ such that for all $\ve>0$ we have
\begin{equation}\label{kz1}
-C\leq u_0\leq u_\ve\leq 0.
\end{equation}
Then a key result of \cite{BEGZ} (when $[\alpha]$ is big) and \cite{FGS} (when $[\alpha]$ is not big, with a recent new proof in \cite{GPTW}) shows that
there is $C>0$ such that for all $\ve>0$ small we have
$$\sup_X|\vp_\ve-u_\ve|\leq C,$$
and combining this with \eqref{kz1} proves \eqref{esti}.
\end{proof}

Thus, Conjecture \ref{c4} can be restated in terms of a uniform $L^\infty$ estimate for the potentials of Ricci-flat K\"ahler metrics $\omega_\ve$ whose cohomology class converges to $[\alpha]$ as $\ve\to 0$.  From this viewpoint, when $[\alpha]$ is semiample the Ricci-flat metrics $\omega_\ve$ behave very nicely:

\begin{theorem}[T. \cite{To2}, Collins-T. \cite{CT}, Hein-T. \cite{HT}]\label{kk} Let $X$ be a Calabi-Yau manifold, $[\alpha]$ a semiample class and $\omega_\ve=\alpha+\ve\omega+\ddbar\vp_\ve$ the Ricci-flat K\"ahler metric cohomologous to $[\alpha]+\ve[\omega]$. Then there is a closed analytic subvariety $S\subset X$ such that for every $k\in\mathbb{N}$ and every $K\Subset X\backslash S$ there is $C>0$ such that for all $\ve>0$ small we have
$$\|\vp_\ve\|_{C^k(K,\omega)}\leq C.$$
\end{theorem}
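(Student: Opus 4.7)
The plan is to use the map supplied by semiampleness as a scaffold and to bootstrap: first a global $L^\infty$ bound on $\vp_\ve$, then a Laplacian bound on compact subsets of $X\setminus S$ for a suitable bad locus $S$, and finally $C^k_{\mathrm{loc}}$ bounds of all orders by Evans--Krylov and Schauder. By the definition of semiampleness, there is a surjective holomorphic map $f:X\to Y$ with connected fibers onto a normal compact K\"ahler space and a K\"ahler class $[\beta]$ on $Y$ with $[\alpha]=f^*[\beta]$. Picking a smooth K\"ahler representative $\omega_Y$ of $[\beta]$ (via a local embedding of $Y$ near its singular points) and setting $\alpha_0:=f^*\omega_Y$, we obtain a smooth closed semipositive representative of $[\alpha]$; after adjusting $\vp_\ve$ by an $\ve$-independent smooth function we may assume $\alpha=\alpha_0$. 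Let $S\subset X$ be the union of $f^{-1}(Y_{\mathrm{sing}})$ and the critical locus of $f$ over $Y_{\mathrm{reg}}$; this is a proper closed analytic subvariety, and on $X\setminus S$ the map $f$ is a holomorphic submersion while $\alpha_0$ is strictly positive in directions transverse to the fibers.

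The global bound $\sup_X|\vp_\ve|\leq C$ is the easiest step, since $\alpha_0$ is a smooth semipositive representative of $[\alpha]$: the preceding theorem applies, and alternatively one can invoke directly the Ko\l odziej / Eyssidieux--Guedj--Zeriahi / \cite{GPTW} $L^\infty$ estimate for Monge--Amp\`ere equations with degenerating class. The crux is the next step, a uniform bound $\tr{\omega_\ve}{(\alpha_0+\omega)}\leq C_K$ on each $K\Subset X\setminus S$. I would apply the Chern--Lu / Aubin--Yau second-order inequality to the identity map $(X,\omega_\ve)\to(X,\alpha_0+\omega)$, obtaining
$$\Delta_{\omega_\ve}\log \tr{\omega_\ve}{(\alpha_0+\omega)}\geq -C_1\tr{\omega_\ve}{(\alpha_0+\omega)}-C_2,$$
by using the upper bound on the bisectional curvature of the smooth K\"ahler metric $\alpha_0+\omega$, and then apply the maximum principle on $X\setminus S$ to the auxiliary quantity
$$Q_\ve:=\log\tr{\omega_\ve}{(\alpha_0+\omega)}-A\vp_\ve+\delta\log|\sigma|^2_h,$$
where $\sigma$ is a section of an auxiliary line bundle cutting out (a divisor containing) $S$ and $h$ is a smooth Hermitian metric on it. The barrier $\delta\log|\sigma|^2_h$ pushes the maximum of $Q_\ve$ off $S$, while the identity $\Delta_{\omega_\ve}\vp_\ve=n-\tr{\omega_\ve}{(\alpha_0+\ve\omega)}$ combined with $A$ chosen large enough absorbs the bad right-hand side and yields the estimate. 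Granting this, $\omega_\ve$ and $\alpha_0+\omega$ are uniformly comparable on $K$, so the Monge--Amp\`ere equation $\omega_\ve^n=c_\ve e^F\omega^n$ is uniformly elliptic on $K$ with uniformly bounded coefficients; Evans--Krylov then yields a uniform $C^{2,\alpha}_{\mathrm{loc}}(X\setminus S)$ bound, and a standard Schauder bootstrap obtained by differentiating the equation upgrades this to uniform $C^k_{\mathrm{loc}}(X\setminus S)$ bounds for every $k$.

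The main obstacle is the Laplacian estimate, because two degenerations compete: across $S$ the reference form $\alpha_0$ fails to be positive definite, and in the collapsing regime $\dim Y<\dim X$ the volume $\int_X(\alpha_0+\ve\omega)^n\to 0$, so the Monge--Amp\`ere constant $c_\ve\to 0$ as $\ve\to 0$. The barrier $\delta\log|\sigma|^2_h$ addresses the first by forcing the maximum off the bad locus, and the second is handled by comparing $\omega_\ve$ to $\alpha_0+\omega$ (which already reflects the degeneration) rather than to $\omega$. Arranging the constants so that everything remains uniform in $\ve$ is precisely the technical core of \cite{To2, CT, HT}; once this is in place, the remaining steps are essentially standard.
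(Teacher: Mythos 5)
This is a survey, and the paper does not actually prove Theorem \ref{kk}: it is quoted from \cite{To2,CT} (the nef and big case) and \cite{HT} (the collapsing case), so there is no in-paper proof to compare against. Measured against what those papers do, your outline is essentially correct for the case $\dim Y=\dim X$: there $c_\ve\to c_0>0$, the Chern--Lu inequality applied with target a fixed K\"ahler metric plus a barrier with poles along $\mathrm{Null}([\alpha])$ gives a uniform two-sided bound on $\omega_\ve$ away from $S$, and Evans--Krylov plus Schauder finish the job (the only cosmetic difference is that in \cite{CT} the barrier is a quasi-psh weight with analytic singularities coming from a K\"ahler current, rather than $\log|\sigma|^2_h$ for an honest line bundle section, since $[\alpha]$ need not be rational).

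However, in the collapsing case $\dim Y<\dim X$ your key second-order estimate is not just unproven but false, and this is precisely the case that makes the theorem hard. If $\tr{\omega_\ve}{(\alpha_0+\omega)}\leq C_K$ held on $K$, then $\omega_\ve\geq C_K^{-1}(\alpha_0+\omega)$ there, hence $\omega_\ve^n\geq C_K^{-n}(\alpha_0+\omega)^n$ would be bounded below on $K$; but $\omega_\ve^n=c_\ve e^F\omega^n$ with $c_\ve=\int_X(\alpha+\ve\omega)^n/\int_Xe^F\omega^n\to 0$ when $[\alpha]^n=0$. So $\omega_\ve$ is never uniformly comparable to the fixed metric $\alpha_0+\omega$ (which is uniformly equivalent to $\omega$ and does not ``reflect the degeneration'' as you claim); the fibers of $f$ genuinely shrink. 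What Chern--Lu actually yields in this regime is only $\tr{\omega_\ve}{f^*\omega_Y}\leq C$ together with $\tr{\omega}{\omega_\ve}\leq C$, i.e.\ control in the base directions and an upper bound, and the Monge--Amp\`ere equation remains badly degenerate (non-uniformly elliptic) as $\ve\to 0$, so Evans--Krylov and the Schauder bootstrap do not apply. This is exactly why only $C^0$ and partial $C^\alpha$ results were available for a decade, and why the full $C^k$ bounds of \cite{HT} require a completely different mechanism (an iteration scheme decomposing $\vp_\ve$ into base, fiber, and rapidly oscillating components, with blow-up arguments at the adiabatic scale), none of which is captured by your maximum-principle scheme. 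In short: your proposal proves the \cite{To2,CT} half of the theorem but not the \cite{HT} half.
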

Recall that since $[\alpha]$ is semiample, it is of the form $[\alpha]=f^*[\beta]$ for some map $f:X\to Y$ as before and $[\beta]$ K\"ahler on $Y$.
When $[\alpha]$ is semiample and big, Theorem \ref{kk} was shown in \cite{To2,CT}, and $S$ in this case can be taken to be $\mathrm{Exc}(f)=\mathrm{Null}([\alpha])$. The much harder case when $[\alpha]$ is semiample but not big was recently settled in \cite{HT}, and $S$ here can be taken to be the singular fibers of $f$.

\section{The non-K\"ahler case}\label{nk}
In this last section we expand our setting and consider a general compact complex manifold $X^n$, equipped with a Hermitian metric $\omega$, without any K\"ahlerity assumption. The space of $(1,1)$-classes $H^{1,1}(X,\mathbb{R})$ is defined exactly as in \eqref{quot}. When $X$ is non-K\"ahler (i.e. it does not admit any K\"ahler metric) then its K\"ahler cone $\mathcal{C}\subset H^{1,1}(X,\mathbb{R})$ is empty by definition. However, following \cite{DPS}, one can still meaningfully define a cone $\mathcal{N}\subset H^{1,1}(X,\mathbb{R})$ of nef $(1,1)$-classes (not equal to the closure $\mathcal{C}$ in general), by imitating the characterization in \eqref{succa}, namely given a closed real $(1,1)$-form $\alpha$ on $X$ we will say that $[\alpha]\in\mathcal{N}$ whenever given any $\ve>0$ there is $\vp_\ve\in C^\infty(X,\mathbb{R})$ such that \eqref{succa} holds on $X$.

A very basic question that we raised in 2016 (see \cite[(3.2)]{To}) is the following:
\begin{conjecture}\label{c0}
Let $[\alpha]\in\mathcal{N}$, and $V\subset X$ be a closed irreducible analytic subvariety of dimension $k>0$. Then we have
$$\int_V\alpha^k\geq 0.$$
\end{conjecture}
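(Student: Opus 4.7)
My first step is to reduce to the top-dimensional case $V=X$ by a resolution of singularities. Let $\pi:\ti{V}\to V$ be a resolution, so $\ti V$ is a compact complex manifold of dimension $k$ and $\ti\alpha:=\pi^*\alpha$ represents a class in $H^{1,1}(\ti V,\mathbb{R})$. This class is nef on $\ti V$: picking any Hermitian metric $\ti\omega$ on $\ti V$, compactness gives $\pi^*\omega\leq C\ti\omega$ for some $C>0$, and pulling back \eqref{succa} yields $\ti\alpha+\ddbar(\vp_\ve\circ\pi)\geq -C\ve\ti\omega$. Since $\int_V\alpha^k=\int_{\ti V}\ti\alpha^k$, Conjecture \ref{c0} reduces to the following top self-intersection statement: if $Y^n$ is a compact complex manifold and $[\beta]\in\mathcal{N}$, then $\int_Y\beta^n\geq 0$.

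For this reduced problem I would fix a Hermitian metric $\omega_Y$ on $Y$, pick $\vp_\ve\in C^\infty(Y,\mathbb{R})$ with $\beta_\ve:=\beta+\ve\omega_Y+\ddbar\vp_\ve\geq 0$ pointwise, and compute
$$0\leq\int_Y\beta_\ve^n=\int_Y(\beta+\ddbar\vp_\ve)^n+\sum_{c=1}^n\binom{n}{c}\ve^c\int_Y(\beta+\ddbar\vp_\ve)^{n-c}\wedge\omega_Y^c.$$
The first term on the right equals $\int_Y\beta^n$: the difference is a sum of integrals of $\beta^{n-m}\wedge(\ddbar\vp_\ve)^m$ for $m\geq 1$, and since $\beta$ is closed and $\ddbar\vp_\ve=d(i\db\vp_\ve)$, each such integrand is $d$-exact and integrates to zero by Stokes. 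Thus if the cross terms with $c\geq 1$ tend to zero as $\ve\to 0$, we conclude $\int_Y\beta^n\geq 0$.

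The main obstacle is controlling these cross terms. In the K\"ahler case the closedness of $\omega_Y$ makes each $\int_Y(\beta+\ddbar\vp_\ve)^{n-c}\wedge\omega_Y^c$ cohomological and independent of $\vp_\ve$, so the factor $\ve^c$ suppresses it. In the non-K\"ahler setting it depends genuinely on $\vp_\ve$; iterated integration by parts rewrites each term as an integral of $\vp_\ve$ against a fixed smooth form built from $\beta$ and various $\de\db\omega_Y^s$. Choosing $\omega_Y$ Gauduchon (so $\de\db\omega_Y^{n-1}=0$, which always exists) takes care of the lowest-order terms, but the remaining terms require a uniform bound $\sup_Y|\vp_\ve|\leq C$ independent of $\ve$, after which Chern-Levine-Nirenberg type inequalities would bound each cross term by $O(\ve)$. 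This $L^\infty$ estimate is essentially the non-K\"ahler analogue of Conjectures \ref{c4} and \ref{c5}: the existence in $[\beta]$ of a closed positive current with bounded potentials. I expect this to be the decisive bottleneck, since it is open even in the K\"ahler Calabi-Yau case, though the recent $L^\infty$ theory for Monge-Amp\`ere on Hermitian manifolds gives some hope.
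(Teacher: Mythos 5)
The statement you are asked to prove is an open conjecture, and the paper does not prove it either; what the paper does prove is that Conjecture \ref{c0} is equivalent to its top-dimensional case, Conjecture \ref{c1}, and then it establishes several partial cases of the latter. Your proposal is honest about exactly this, so there is no hidden gap being papered over: the first half of your plan is a correct proof of the reduction, and the second half correctly identifies why the reduced statement remains open.

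Your reduction step is essentially the paper's own argument for the equivalence of Conjectures \ref{c0} and \ref{c1}, with one harmless variation: you resolve $V$ directly by $\pi:\ti{V}\to V$, while the paper takes an ambient modification $\mu:\ti{X}\to X$ making the strict transform of $V$ smooth. Either way one obtains a compact complex $k$-fold mapping generically isomorphically onto $V$, the pullback class is nef by the same $\pi^*\omega\leq C\ti{\omega}$ argument applied to \eqref{succa}, and $\int_V\alpha^k=\int_{\ti{V}}\pi^*\alpha^k$. Your identity $\int_Y(\beta+\ddbar\vp_\ve)^n=\int_Y\beta^n$ is also correct on an arbitrary compact complex manifold, since all forms involved are closed. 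For the remaining cross terms, your diagnosis matches the paper's Proposition: if $\omega_Y$ satisfies $\de\db\omega_Y=\de\db(\omega_Y^2)=0$ (hence $\de\db(\omega_Y^k)=0$ for all $k$) the cross terms become cohomological and the argument closes, which is the paper's case (a) and, via Gauduchon metrics, its case (c) for surfaces; note that a Gauduchon metric alone ($\de\db\omega_Y^{n-1}=0$) kills only the $c=n-1$ term, so for $n\geq 3$ it does not suffice, as you suspect. Your proposed fallback --- a uniform $L^\infty$ bound on potentials plus Chern--Levine--Nirenberg estimates --- is precisely the paper's case (d) (a closed positive current with bounded potentials in the class), and the link you draw to Conjectures \ref{c4} and \ref{c5} is the one the paper itself makes. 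In short: the reduction is correct and matches the paper; the rest is a correct map of the known partial results rather than a proof, which is the best one can currently do.
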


It is not hard to see that Conjecture \ref{c0} is equivalent to the following:
\begin{conjecture}\label{c1}
Let $[\alpha]\in\mathcal{N}$. Then we have
$$\int_X\alpha^n\geq 0.$$
\end{conjecture}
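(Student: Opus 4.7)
The plan is to use the smooth regularizations built into the definition of nefness, check positivity at the top-form level, and then strip the regularization away via Stokes and integration by parts. Fix a Hermitian metric $\omega$ on $X$ (to be chosen judiciously), and for each $\ve>0$ use $[\alpha]\in\mathcal{N}$ to produce $\varphi_\ve\in C^\infty(X,\mathbb{R})$ with
$$\alpha_\ve:=\alpha+\ve\omega+\ddbar\varphi_\ve\geq 0.$$
Then $\alpha_\ve^n$ is a semipositive top form, so $\int_X\alpha_\ve^n\geq 0$, and the entire argument reduces to comparing $\int_X\alpha_\ve^n$ with $\int_X\alpha^n$ as $\ve\to 0$.

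Expand $\alpha_\ve^n$ multinomially and write $v_\ve:=\ddbar\varphi_\ve$. The terms $\alpha^j\wedge v_\ve^{n-j}$ with $n-j\geq 1$ all vanish by Stokes: $\alpha$ is $d$-closed, and each power $v_\ve^l$ is $d$-exact (since $v_\ve=d(\mn\dbar\varphi_\ve)$, and powers of a closed exact form are again exact). The terms $\alpha^j\wedge(\ve\omega)^{n-j}$ with $n-j\geq 1$ are independent of $\varphi_\ve$ and contribute $O(\ve)$. One is thus left with
$$\int_X\alpha_\ve^n=\int_X\alpha^n+O(\ve)+\sum_{\substack{j+k+l=n\\ k,\,l\geq 1}}\frac{n!}{j!\,k!\,l!}\,\ve^k\int_X\alpha^j\wedge\omega^k\wedge v_\ve^l,$$
and the difficulty is to control the ``mixed'' terms in the last sum: repeated integration by parts converts each $v_\ve$ into a factor $\ddbar(\omega^m)$, which is nonzero in the non-K\"ahler setting.

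To defuse the mixed terms one would choose $\omega$ with as much $\ddbar$-closedness as possible. For $n=2$ the only mixed term is $2\ve\int_X\omega\wedge v_\ve=-2\ve\int_X\varphi_\ve\cdot\ddbar\omega$, which vanishes when $\omega$ is Gauduchon ($\ddbar\omega=0$, always available on a compact complex surface); this gives a full proof when $n=2$. For $n\geq 3$ the analogous strategy would demand $\ddbar(\omega^k)=0$ for every $1\leq k\leq n-1$ simultaneously, a condition not guaranteed on a general compact complex manifold; this is the main obstacle. To move past it I would either (i) extract a subsequential weak limit $T=\lim\alpha_\ve$ as a closed positive $(1,1)$-current in $[\alpha]$ with mass bound $\int_X\alpha_\ve\wedge\omega^{n-1}\leq C$ secured by taking $\omega$ Gauduchon, then define $\int_X\langle T^n\rangle$ via a non-K\"ahler extension of the Boucksom-Eyssidieux-Guedj-Zeriahi non-pluripolar product and prove a monotonicity inequality $\int_X\langle T^n\rangle\leq\int_X\alpha^n$; or (ii) select $\varphi_\ve$ as solutions of a suitable Monge-Amp\`ere-type equation so as to gain $L^\infty$ or capacity estimates that tame the mixed integrals uniformly in $\ve$. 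Either route requires genuinely new input beyond the formal Stokes calculation, and locating this input is where the real weight of the conjecture lies.
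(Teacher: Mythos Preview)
This statement is labeled a \emph{Conjecture} in the paper, and the paper does not prove it: the text explicitly says ``Conjecture \ref{c1} remains open even when $n=3$.'' So there is no ``paper's own proof'' to compare against, only the partial results collected in the Proposition immediately following. Your proposal is honest about this --- you yourself say the two routes you sketch ``require genuinely new input'' --- so you are not claiming a proof either.

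What you do establish matches the paper's partial results closely. Your Stokes/integration-by-parts computation, together with the observation that the mixed terms vanish whenever $\ddbar(\omega^k)=0$ for all $1\leq k\leq n-1$, is exactly the content of part (a) of the paper's Proposition (the paper notes that $\ddbar\omega=0=\ddbar(\omega^2)$ implies $\ddbar(\omega^k)=0$ for all $k$, and then integrates by parts just as you do). Your $n=2$ argument via a Gauduchon metric is part (c). So the concrete portion of your proposal is correct and essentially identical to what the paper proves.

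The speculative portion is where the gap lies, and you name it yourself: on a general compact complex manifold with $n\geq 3$ there is no reason for a metric with $\ddbar(\omega^k)=0$ for all $k$ to exist, and neither your route (i) (a non-K\"ahler non-pluripolar product with a monotonicity inequality) nor your route (ii) (Monge--Amp\`ere estimates taming the mixed terms) is currently available in the literature. The paper's Proposition adds a few further special cases you do not mention --- $X$ in Fujiki's class $\mathcal{C}$, $[\alpha]$ containing a closed positive current with bounded potential, and the Guedj--Lu finiteness condition $v_+(\omega)<\infty$ --- but none of these covers the general case either. In short: your write-up is a fair summary of what is known and why the problem is hard, not a proof, and the paper is in the same position.
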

\begin{proof}[Proof of the equivalence of Conjectures \ref{c0} and \ref{c1}]
It is clear that Conjecture \ref{c0} implies Conjecture \ref{c1}. Conversely, assume Conjecture \ref{c1} and let $V^k\subset X$ be a closed irreducible subvariety. By Hironaka's resolution of singularities of analytic spaces, we can find a modification $\mu:\ti{X}\to X$ with $\ti{X}$ a compact complex manifold, such that $\mu$ is an isomorphism over the generic point of $V$ and the strict transform $\ti{V}$ of $V$ via $\mu$ is smooth. Then $\ti{V}$ is also $k$-dimensional and the class $[\mu^*\alpha]$ is nef on $\ti{X}$: indeed, given a Hermitian metric $\ti{\omega}$ on $\ti{X}$, there is a constant $C>0$ such that
$$\mu^*\omega\leq C\ti{\omega},$$
on $\ti{X}$. Given $\ve>0$ let $\vp_\ve\in C^\infty(X,\mathbb{R})$ be a smooth function such that
$$\alpha+\ddbar\vp_\ve\geq -\ve\omega,$$
on $X$. Pulling back via $\mu$ we get
$$\mu^*\alpha+\ddbar(\vp_\ve\circ\mu)\geq -\ve\mu^*\omega\geq -C\ve\ti{\omega},$$
and so $[\mu^*\alpha]$ is nef. Since $\mu|_{\ti{V}}$ is generically an isomorphism with its image, it follows that
$$\int_V\alpha^k=\int_{\ti{V}}\mu^*\alpha^k.$$
Now the restricted class $[\mu^*\alpha|_{\ti{V}}]\in H^{1,1}(\ti{V},\mathbb{R})$ is immediately seen to be nef, and so by Conjecture \ref{c1} we have
$$\int_{\ti{V}}\mu^*\alpha^k\geq 0,$$
as desired.
\end{proof}

We observe also that Conjecture \ref{c1} is an immediate consequence of a stronger conjecture proposed by Ko\l odziej and the author in \cite[Conjecture 1.2]{KT}, which is the conjectural formula
$$\int_X\alpha^n=\inf_{u\in C^\infty(X,\mathbb{R})}\int_{X(\alpha+\ddbar u,0)}(\alpha+\ddbar u)^n,$$
where $X(\alpha+\ddbar u,0)\subset X$ is the subset of points $x\in X$ where $(\alpha+\ddbar u)(x)\geq 0$, since on this set we clearly have $(\alpha+\ddbar u)^n\geq 0$.

We have the following easy partial results towards Conjecture \ref{c1}:
\begin{proposition}
Conjecture \ref{c1} holds in all of the following cases:
\begin{itemize}
\item[(a)] If $X$ admits a Hermitian metric $\omega$ with $\de\db\omega=0=\de\db(\omega^2)$. In particular, if $X$ is K\"ahler.
\item[(b)] If $X$ is in Fujiki's class $\mathcal{C}$. In particular, if $[\alpha]$ is big, in the sense that it contains a K\"ahler current.
\item[(c)] If $\dim X\leq 2$.
\item[(d)] If $[\alpha]$ contains a closed positive current $\alpha+\ddbar\vp\geq 0$ with $\vp$ bounded.
\item[(e)] If $X$ admits a Hermitian metric $\omega$ with
$$v_+(\omega):=\sup\left\{\int_X (\omega+\ddbar\vp)^n\ \bigg|\ \vp\in {\rm PSH}(X,\omega)\cap L^\infty(X)\right\}<\infty.$$
\end{itemize}
\end{proposition}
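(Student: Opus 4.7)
The plan for all five cases starts from the same setup: fix any Hermitian metric $\omega$ on $X$ and, for each $\varepsilon>0$, pick $\varphi_\varepsilon\in C^\infty(X,\mathbb{R})$ as in \eqref{succa}, so that $\alpha_\varepsilon:=\alpha+\varepsilon\omega+i\partial\overline{\partial}\varphi_\varepsilon\ge 0$ on $X$ and hence $\int_X\alpha_\varepsilon^n\ge 0$. Expanding trinomially,
$$\int_X\alpha_\varepsilon^n=\sum_{a+b+c=n}\frac{n!}{a!\,b!\,c!}\,\varepsilon^{b}\int_X\alpha^a\wedge\omega^b\wedge(i\partial\overline{\partial}\varphi_\varepsilon)^c,$$
the $c=0$ part contributes exactly $\int_X(\alpha+\varepsilon\omega)^n=\int_X\alpha^n+O(\varepsilon)$, so the task in every case is to show that the $c\ge 1$ part is also $o(1)$ as $\varepsilon\to 0^+$; then $\int_X\alpha^n\ge -O(\varepsilon)$, forcing $\int_X\alpha^n\ge 0$.

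For (a), I would first prove the algebraic observation that the two hypotheses together imply $\partial\overline{\partial}(\omega^k)=0$ for \emph{every} $k\ge 1$: under $\partial\overline{\partial}\omega=0$ a direct Leibniz computation gives $\partial\overline{\partial}(\omega^2)=2\,\partial\omega\wedge\overline{\partial}\omega$, so the assumption $\partial\overline{\partial}(\omega^2)=0$ is equivalent to $\partial\omega\wedge\overline{\partial}\omega=0$, and a second Leibniz computation on $\omega^k=\omega\wedge\omega^{k-1}$ closes the induction. Given this, the Stokes-type identity $\int_X i\partial\overline{\partial}\psi\wedge\beta=\int_X i\psi\,\partial\overline{\partial}\beta$ (valid for smooth real $\psi$ and smooth real $(n-1,n-1)$-form $\beta$ on any compact complex manifold) kills every $c\ge 1$ term: after moving one factor of $i\partial\overline{\partial}\varphi_\varepsilon$ outside, the remaining $\alpha^a\wedge\omega^b\wedge(i\partial\overline{\partial}\varphi_\varepsilon)^{c-1}$ is $\partial\overline{\partial}$-closed, since $\alpha^a\wedge(i\partial\overline{\partial}\varphi_\varepsilon)^{c-1}$ is $d$-closed and $\omega^b$ is $\partial\overline{\partial}$-closed. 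Case (c) is the $n\le 2$ instance of (a): for $n=1$ one has $\int_X i\partial\overline{\partial}\varphi_\varepsilon=0$ directly by Stokes; for $n=2$ Gauduchon's theorem supplies a metric with $\partial\overline{\partial}\omega=0$, and $\partial\overline{\partial}(\omega^2)=0$ holds automatically as a form of bidegree $(3,3)$ on a surface. Case (b) reduces to (a) by pulling back along a modification $\mu\colon\widetilde{X}\to X$ with $\widetilde{X}$ K\"ahler: $[\mu^*\alpha]$ is nef on $\widetilde{X}$ by the pullback argument used in the proof of equivalence of Conjectures \ref{c0} and \ref{c1}, and $\int_X\alpha^n=\int_{\widetilde{X}}(\mu^*\alpha)^n\ge 0$ by (a); the ``big'' sub-case is included because a compact complex manifold carrying a K\"ahler current lies in Fujiki class $\mathcal{C}$ by a well-known result.

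For (d), the plan is pluripotential-theoretic: when $\varphi\in L^\infty(X)$ is $\alpha$-psh, the Bedford–Taylor product $(\alpha+i\partial\overline{\partial}\varphi)^n$ is a well-defined nonnegative Radon measure, and the identity $\int_X(\alpha+i\partial\overline{\partial}\varphi)^n=\int_X\alpha^n$ — which for smooth $\varphi$ on any compact complex manifold follows from $d\alpha=0$ and the Stokes identity above — extends to bounded $\alpha$-psh $\varphi$ by standard approximation and monotone-convergence arguments, giving $\int_X\alpha^n\ge 0$. For (e), the plan is to trade the missing $\partial\overline{\partial}$-closedness of $\omega^k$ for the quantitative hypothesis $v_+(\omega)<\infty$. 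Choose $C>0$ with $\alpha\le C\omega$, so that $0\le\alpha_\varepsilon\le(C+\varepsilon)\omega+i\partial\overline{\partial}\varphi_\varepsilon$; eigenvalue monotonicity of the elementary symmetric polynomial $e_{n-m}$, applied in an $\omega$-orthonormal frame, upgrades this to the pointwise top-form inequality
$$\omega^m\wedge\alpha_\varepsilon^{n-m}\le(C+\varepsilon)^{n-m}\,\omega^m\wedge(\omega+i\partial\overline{\partial}\psi_\varepsilon)^{n-m},\qquad \psi_\varepsilon:=\tfrac{\varphi_\varepsilon}{C+\varepsilon},$$
with $\psi_\varepsilon$ smooth and $\omega$-psh. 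The elementary binomial bound $\binom{n}{m}\omega^m\wedge(\omega+T)^{n-m}\le(2\omega+T)^n=2^n(\omega+i\partial\overline{\partial}(\psi_\varepsilon/2))^n$ applied to $T=i\partial\overline{\partial}\psi_\varepsilon$, together with $v_+(\omega)<\infty$ applied to the bounded $\omega$-psh function $\psi_\varepsilon/2$, then yields $\int_X\omega^m\wedge\alpha_\varepsilon^{n-m}\le C'$ uniformly in $\varepsilon\in(0,1]$. Substituting $\alpha+i\partial\overline{\partial}\varphi_\varepsilon=\alpha_\varepsilon-\varepsilon\omega$ back into the expansion shows that every $c\ge 1$ contribution carries at least one extra factor of $\varepsilon$, so $\int_X\alpha_\varepsilon^n=\int_X\alpha^n+O(\varepsilon)$ and (e) follows.

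The main obstacle, and the step where the five cases genuinely diverge, is extracting enough control on the mixed wedge products $\omega^b\wedge\alpha^a\wedge(i\partial\overline{\partial}\varphi_\varepsilon)^c$ to see that the $c\ge 1$ part of the expansion is negligible: in (a)–(c) this control is cohomological and comes from the Stokes-type integration by parts combined with the $\partial\overline{\partial}$-closedness of every $\omega^k$, which is the real content of the algebraic observation in (a); in (d) it is immediate from the boundedness of $\varphi$ and the Bedford–Taylor theory; in (e) it is genuinely quantitative and rests on the linear-algebra monotonicity of $e_{n-m}$ together with the finiteness of $v_+(\omega)$.
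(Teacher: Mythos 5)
Your proofs of (a), (b), (c) follow the paper's argument exactly, with the added value that you actually carry out the ``well-known direct calculation'' showing $\de\db\omega=0=\de\db(\omega^2)$ forces $\de\db(\omega^k)=0$ for all $k$, and you correctly identify why each $c\geq 1$ term dies under integration by parts (the form being wedged against $\ddbar\vp_\ve$ is $\de\db$-closed because the closed factors contribute nothing and $\omega^b$ is $\de\db$-closed). Part (d) is also the same route in substance: the paper regularizes the bounded potential via Demailly's theorem and controls the error terms by the Chern--Levine--Nirenberg claim of \cite[p.997]{KT}; your phrase ``standard approximation and monotone-convergence arguments'' is hiding exactly this step, and it is worth flagging that on a non-K\"ahler manifold the smooth approximants are only $(\alpha+\ve\omega)$-psh, so the extension of the mass identity to bounded potentials is precisely the uniform-$L^\infty$-plus-CLN mechanism rather than the usual K\"ahler argument. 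The genuine divergence is in (e): the paper simply invokes the proof of \cite[Theorem 4.12]{GL}, whereas you reconstruct a self-contained quantitative argument. Your chain is correct: $0\leq\alpha_\ve\leq(C+\ve)(\omega+\ddbar\psi_\ve)$ with $\psi_\ve=\vp_\ve/(C+\ve)$ smooth and $\omega$-psh, monotonicity of $\omega^m\wedge(\cdot)^{n-m}$ on the cone of nonnegative $(1,1)$-forms (via the binomial expansion of $(\alpha_\ve+P)^{n-m}$ with $P\geq 0$), the bound $\binom{n}{m}\omega^m\wedge(\omega+T)^{n-m}\leq 2^n(\omega+\ddbar(\psi_\ve/2))^n$, and finiteness of $v_+(\omega)$ give $\int_X\omega^m\wedge\alpha_\ve^{n-m}\leq C'$ uniformly; then organizing $\int_X\alpha_\ve^n=\sum_j\binom{n}{j}\ve^j\int_X\omega^j\wedge(\alpha+\ddbar\vp_\ve)^{n-j}$, noting the $j=0$ term equals $\int_X\alpha^n$ by Stokes and the $j\geq 1$ terms are $O(\ve)$ after substituting $\alpha+\ddbar\vp_\ve=\alpha_\ve-\ve\omega$, finishes the proof. (A small bookkeeping caveat: if you instead expand trinomially as in your setup, the factors of $\alpha$ in the mixed integrals are not nonnegative, so you should either sandwich $\alpha$ between $\pm C\omega$ or use the binomial organization just described.) What your version buys is a proof readable without \cite{GL}; what the paper's citation buys is brevity and the reassurance that the estimate is exactly the one isolated by Guedj--Lu in defining $v_+(\omega)$.
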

The quantity $v_+(\omega)$ in (e) was recently introduced by Guedj-Lu \cite{GL}.
\begin{proof}
(a) Let $\vp_\ve$ be smooth functions such that $\alpha+\ddbar\vp_\ve\geq -\ve\omega$ on $X$.
A well-known direct calculation shows that we have $\de\db(\omega^k)=0$ for all $1\leq k\leq n-1$. We can then integrate by parts as usual and obtain that
$$0\leq \int_X(\alpha+\ve\omega+\ddbar\vp_\ve)^n=\int_X(\alpha+\ve\omega)^n,$$
and letting $\ve\to 0$ concludes the proof.

(b) $X$ in class $\mathcal{C}$ means that we can find a modification $\mu:\ti{X}\to X$ with $\ti{X}$ a compact K\"ahler manifold. As shown earlier, the pullback class $[\mu^*\alpha]$ is nef and $$\int_X\alpha^n=\int_{\ti{X}}\mu^*\alpha^n.$$
Since $\ti{X}$ is K\"ahler, we conclude using part (a). It is also known \cite{DP} that if $[\alpha]$ is big then $X$ is in class $\mathcal{C}$.

(c) The case when $\dim X=1$ is elementary. If $\dim X=2$ then by a classical theorem of Gauduchon \cite{Ga}, $X$ admits a Hermitian metric $\omega$ with $\de\db\omega=0$, so part (a) applies.

(d) By Demailly's regularization theorem \cite{Dem2} applied to the bounded function $\vp$, we can find smooth functions $\vp_\ve$ on $X$ which decrease pointwise to $\vp$ as $\ve\to 0$, and such that
$$\alpha+\ddbar\vp_\ve\geq -\ve\omega.$$
Since $\vp$ is bounded, it follows that $$\|\vp_\ve\|_{L^\infty(X)}\leq C,$$
for all $\ve>0$. Then a standard Chern-Levine-Nirenberg type argument in \cite[Claim, p.997]{KT} shows that
\begin{equation}\label{useless2}
0\leq \lim_{\ve\to 0}\int_X(\alpha+\ve\omega+\ddbar\vp_\ve)^n=\int_X\alpha^n,
\end{equation}
as desired.

(e) For $\ve>0$ choose smooth functions $\vp_\ve$ as in (a), then the proof of \cite[Theorem 4.12]{GL} shows that the assumption $v_+(\omega)<\infty$ implies that \eqref{useless2} again holds.
\end{proof}

Despite these partial results, Conjecture \ref{c1} remains open even when $n=3$. The special case when $[\alpha]=c_1(L)$ for a holomorphic line bundle $L$ might be more approachable, for example using Demailly's holomorphic Morse inequalities \cite{Dem}, but so far it remains open as well.  In fact, despite the fact that failure of Conjecture \ref{c1} would be rather strange indeed, the author suspects that perhaps counterexamples do exist, and it might be desirable to do some explicit computations in the hope of finding one.

\end{document}